\theoremstyle{plain}
\newtheorem{theorem}{Theorem}[section]
\newtheorem{lemma}[theorem]{Lemma}
\newtheorem{corollary}[theorem]{Corollary}
\newtheorem{definition}[theorem]{Definition}
\newtheorem{proposition}[theorem]{Proposition}
\newtheorem{example}[theorem]{Example}
\title{On the multiple exterior degree of finite groups}
\author[R. Rezaei]{Rashid Rezaei}
\address{Department of Mathematics, Faculty of sciences, Malayer university, Malayer, Iran}
\email{ras$\_$rezaei@yahoo.com}
\author[P. Niroomand]{Peyman Niroomand}
\address{School of Mathematics and Computer Science\\
Damghan University, Damghan, Iran}
\email{p$\_$niroomand@yahoo.com, niroomand@du.ac.ir}
\author[A. Erfanian]{Ahmad Erfanian}
\address{Department of Mathematics and Centre of Excellence in Analysis on Algebraic Structures, Ferdowsi University of Mashhad, Mashhad, Iran.}
\email{erfanian@math.um.ac.ir}
\keywords{Exterior degree,  Multiple exterior degree, multiple commutativity degree, Capable groups, Schur multiplier}
\subjclass[2000]{20F99; 20P05.}
\date{}
\begin{document}

\begin{abstract}
Recently, two first authors have introduced a group invariant, which is related
to the number of elements $x$ and $y$ of a finite group $G$ such that $x\wedge y=1$ in the exterior square $G\wedge G$ of $G$.  Research on this probability gives some relations between the concept and Schur multiplier and the capability of finite groups. In the present paper, we will generalize the concept of exterior degree of groups and we will introduce
 the multiple exterior degree of finite groups. Among the other results, we will state some results between the multiple exterior degree, multiple commutativity degree and capability of finite groups.

\end{abstract}

\maketitle
\section{Introduction }
Let $(g,h)\mapsto~ ^hg$ be the conjugation action of $G$ on itself and
$[g,h] = ghg^{-1}h^{-1}$. In \cite{brow1,brow2,brow3} the tensor square $G\otimes G$ is defined as the group
generated by the symbols $g\otimes h$ subject to the relations
\[gk\otimes h=(~^gk\otimes ~^gh)(g\otimes h)~~\text{and}~~~
g\otimes hk=(g\otimes h)(~^hg\otimes ~^hk)~~\text{for all}~~g, k, h\in G.\] The exterior square $G\wedge G$ is obtained by imposing the additional relations $g\otimes g=1$ ($g\in G$) on $G\otimes G$. The definition emphasizes there is an epimorphism $\kappa$ from $G\otimes G$ to $G^{'}$ defined on the generators by $\kappa(g\otimes h)=[g,h]$ for all $g,h\in G$, and its kernel is denoted by $J_{2}(G)$. Let $\nabla(G)$ be the subgroup of $J_{2}(G)$, generated by all
elements $g\otimes g$, for all $g$ in $G$. Then $G\wedge G=G\otimes G/\nabla(G)$ and the image of $x\otimes y$
is denoted by $x\wedge y$  for all $x$ and $y$ in $G$. Since $\kappa(\nabla(G))=1$, $\kappa$
induces the epimorphism $\kappa^{'}$ from $G\wedge G$ onto $G^{'}$
with
 $M(G)$, the Schur multiplier of the group $G$, as its kernel (see Miller \cite{millc}).

In this context, if $x\in G$, the exterior centralizer of $x$ in $G$ is the set $C^{^{\wedge}}_G(x)=\{y\in G ~|~ x\wedge y=1\},$
which turns out to be a subgroup of $G$ and the exterior center of $G$, which is denoted by $Z^{^{\wedge}}(G)$, is the intersection of all exterior centralizers of elements of $G$;(see Bacon and Kappe \cite{baka}). Ellis in \cite{ellis} showed that a group
$G$ is
capable if and only if $Z^{^{\wedge}}(G)=1$.

 The exterior degree of finite groups is introduced in \cite{pr}, by two first authors, which is the probability for two elements of $G$ like $x$ and $y$ such that $x\wedge y=1$. The aim of paper is a generalization of the concept of exterior degree of groups. We define the multiple exterior degree of a finite group and try to find some upper bounds for this degree and
state some relations between multiple exterior degree, multiple commutativity degree and capability  of
groups.
\section{Some known results}

In this section we are reminded some known results for the multiple commutativity degree and exterior degree of finite groups. For a finite group $G$ the multiple commutativity degree of $G$ (denoted by $d_n(G)$) is defined by
 $$d_n(G)=\frac{|\{(x_1,x_2,...,x_{n+1})\in G^{n+1}: x_ix_j=x_jx_i, ~ \textrm{for all} ~ 1\leq i,j \leq n+1     \}|  }{|G|^{n+1}}.$$
 Clearly $d_0(G)=1$ and $G$ is abelian if and only if $d_n(G)=1$. For the case for which $n=1$, $d_1(G)=d(G)$ is called the commutativity degree of group $G$ that is equal to $k(G)/|G|$, where $k(G)$ is the number of conjugacy classes of G. (see Lescot \cite{l2}).

 \begin{lemma}\label{prim0}
 Let $G$ be a finite group. Then for all $n\in\mathbb{N}$,
 $$ d_{n}(G)=\frac{1}{|G|^{n+1}}\sum_{x\in G}|C_G(x)|^{n}d_{n-1}(C_G(x)).$$
 \end{lemma}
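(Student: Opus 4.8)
The plan is to prove the recursion by conditioning on the first coordinate $x_1$ of the tuple. First I would rewrite the numerator of $d_n(G)$ as a sum over $x\in G$ of the number of tuples $(x_2,\dots,x_{n+1})\in G^n$ such that $x_i$ commutes with $x_j$ for all $1\le i,j\le n+1$, where now $x_1=x$ is fixed. The condition that $x$ commutes with each of $x_2,\dots,x_{n+1}$ means precisely that $x_2,\dots,x_{n+1}\in C_G(x)$, so the inner count is exactly the number of tuples in $C_G(x)^n$ which are pairwise commuting. Hence
\[
d_n(G)=\frac{1}{|G|^{n+1}}\sum_{x\in G}\bigl|\{(x_2,\dots,x_{n+1})\in C_G(x)^n : x_ix_j=x_jx_i\ \text{for all}\ i,j\}\bigr|.
\]

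Next I would recognize the inner quantity as essentially the numerator in the definition of $d_{n-1}(C_G(x))$: by definition $d_{n-1}(H)$ counts pairwise-commuting $n$-tuples in $H^n$ divided by $|H|^n$. Applying this with $H=C_G(x)$ gives that the inner count equals $|C_G(x)|^n\, d_{n-1}(C_G(x))$. Substituting this back yields exactly
\[
d_n(G)=\frac{1}{|G|^{n+1}}\sum_{x\in G}|C_G(x)|^n\, d_{n-1}(C_G(x)),
\]
which is the claimed identity. The indexing needs a small sanity check: in $d_n(G)$ the tuples have length $n+1$, so removing the coordinate $x_1$ leaves length $n$, matching the tuples counted by $d_{n-1}$ (length $(n-1)+1=n$); the base case $n=1$ reduces to the standard identity $d(G)=\frac{1}{|G|^2}\sum_x|C_G(x)|$ since $d_0$ is identically $1$.

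The only genuinely delicate point — really a bookkeeping matter rather than a conceptual obstacle — is making sure the commuting conditions decouple cleanly: the full condition "$x_ix_j=x_jx_i$ for all $0\le i,j\le n$" (with $x_0:=x$) splits into the conditions involving index $0$, namely $x_2,\dots,x_{n+1}\in C_G(x)$, together with the conditions among $x_2,\dots,x_{n+1}$ themselves, and these latter conditions are automatically the pairwise-commuting conditions inside the group $C_G(x)$ rather than inside $G$ (which is the same relation, just restricted). Once that is spelled out the proof is a one-line substitution, so I would keep the exposition brief and emphasize the conditioning step.
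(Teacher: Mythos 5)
Your proposal is correct and is essentially identical to the paper's own proof: both condition on the first coordinate $x_1=x$, observe that the remaining $n$ coordinates must form a pairwise-commuting tuple in $C_G(x)^n$, and identify that count as $|C_G(x)|^n d_{n-1}(C_G(x))$. No further comparison is needed.
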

 \begin{proof}
 \begin{align*}
d_{n}(G)&= \frac{1}{|G|^{n+1}} |\{(x_1,x_2,\cdots,x_{n+1})\in G^{n+1}:~ x_ix_j=x_jx_i, ~ \textrm{for all} ~ 1\leq i,j \leq n+1\}|\\
&=\frac{1}{|G|^{n+1}} \sum_{x\in G} |\{(x_1,x_2,\cdots,x_{n})\in (C_{G}(x))^{n}: x_ix_j=x_jx_i,  \textrm{for all} ~ 1\leq i,j \leq n \}| \\&=\frac{1}{|G|^{n+1}} \sum_{x\in G}|C_{G}(x)|^n { d_{n-1}(C_{G}(x))}.
\end{align*}
 \end{proof}

 The following Lemma is a consequence of Lemma \ref{prim0}.

 \begin{lemma}\label{prim1}\cite[Lemma 4.1]{l2}
 Let ${g_1,\cdots,g_{k(G)}}$ be a system of representative for the conjugacy classes of $G$. Then for all $n\in\mathbb{N}$,
 $$d_{n+1}(G)=\frac{1}{|G|}\sum_{i=1}^{k(G)}\frac{1}{|cl(g_i)|^n}d_n(C_G(g_i))$$
 \end{lemma}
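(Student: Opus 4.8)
The plan is to start from Lemma~\ref{prim0} with the index shifted, i.e.\ replace $n$ by $n+1$, which gives
\[
d_{n+1}(G)=\frac{1}{|G|^{n+2}}\sum_{x\in G}|C_G(x)|^{n+1}d_n(C_G(x)).
\]
The key observation is that both $|C_G(x)|$ and $d_n(C_G(x))$ depend only on the conjugacy class of $x$: if $y={}^gx$ then $C_G(y)={}^gC_G(x)$, so these two centralizers are isomorphic, hence have the same order and the same multiple commutativity degree, the latter being manifestly an isomorphism invariant from its definition. Consequently I would collapse the sum over $G$ into a sum over a transversal $g_1,\dots,g_{k(G)}$ of the conjugacy classes, the $i$-th term appearing $|cl(g_i)|$ times:
\[
d_{n+1}(G)=\frac{1}{|G|^{n+2}}\sum_{i=1}^{k(G)}|cl(g_i)|\,|C_G(g_i)|^{n+1}d_n(C_G(g_i)).
\]

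Next I would invoke the orbit--stabilizer relation $|cl(g_i)|\cdot|C_G(g_i)|=|G|$ to rewrite $|cl(g_i)|\,|C_G(g_i)|^{n+1}=|G|\,|C_G(g_i)|^{n}$, which turns the display into
\[
d_{n+1}(G)=\frac{1}{|G|^{n+1}}\sum_{i=1}^{k(G)}|C_G(g_i)|^{n}d_n(C_G(g_i)).
\]
Finally, substituting $|C_G(g_i)|=|G|/|cl(g_i)|$ once more gives $|C_G(g_i)|^{n}/|G|^{n+1}=1/\bigl(|G|\,|cl(g_i)|^{n}\bigr)$, which is exactly the asserted identity
\[
d_{n+1}(G)=\frac{1}{|G|}\sum_{i=1}^{k(G)}\frac{1}{|cl(g_i)|^{n}}\,d_n(C_G(g_i)).
\]

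There is essentially no hard step here; the argument is pure bookkeeping on top of Lemma~\ref{prim0}. The only points that deserve a word of care are the isomorphism-invariance of $d_n$, so that $d_n(C_G(x))$ is genuinely constant on each conjugacy class and the grouping of terms is legitimate, and the fact that $d_n$ is defined for every finite group (including abelian or trivial centralizers), so the recursion underlying Lemma~\ref{prim0} terminates sensibly; both are immediate from the definitions recalled above.
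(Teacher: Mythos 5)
Your proof is correct and follows exactly the route the paper intends: the paper simply asserts Lemma~\ref{prim1} as a consequence of Lemma~\ref{prim0}, and your derivation (shift the index, group the sum over $G$ by conjugacy classes using the invariance of $|C_G(x)|$ and $d_n(C_G(x))$ under conjugation, then apply $|cl(g_i)|\,|C_G(g_i)|=|G|$) is precisely the omitted bookkeeping.
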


 \begin{theorem}
Let $N$ be a normal subgroup of finite group $G$. Then, $$d_n(G)\leq  d_n(\frac GN).$$
\end{theorem}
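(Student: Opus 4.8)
The plan is to compare pairwise-commuting tuples in $G$ with those in $\frac GN$ through the natural projection. Write $\pi\colon G\to \frac GN$ for the quotient map, and set
$$A=\{(x_1,\dots,x_{n+1})\in G^{n+1}: x_ix_j=x_jx_i\ \text{for all}\ 1\le i,j\le n+1\},$$
$$\bar A=\{(\bar x_1,\dots,\bar x_{n+1})\in (\tfrac GN)^{n+1}: \bar x_i\bar x_j=\bar x_j\bar x_i\ \text{for all}\ 1\le i,j\le n+1\},$$
so that by definition $d_n(G)=|A|/|G|^{n+1}$ and $d_n(\frac GN)=|\bar A|/|\frac GN|^{n+1}$. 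The goal is then exactly the inequality $|A|\,|\tfrac GN|^{n+1}\le |\bar A|\,|G|^{n+1}$.

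First I would observe that the coordinatewise projection $\Pi\colon G^{n+1}\to(\tfrac GN)^{n+1}$, $(x_1,\dots,x_{n+1})\mapsto(\pi(x_1),\dots,\pi(x_{n+1}))$, sends $A$ into $\bar A$: if $x_ix_j=x_jx_i$ in $G$ then applying the homomorphism $\pi$ gives $\pi(x_i)\pi(x_j)=\pi(x_j)\pi(x_i)$. So $\Pi$ restricts to a (not necessarily injective) map $A\to\bar A$.

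Next comes the one real point of the argument, a bound on the fibres of this map. Fix a tuple $(\bar x_1,\dots,\bar x_{n+1})\in\bar A$. Its full preimage under $\Pi$ is the product of cosets $x_1N\times\cdots\times x_{n+1}N$, which has exactly $|N|^{n+1}$ elements; hence the part of this preimage that actually lies in $A$ has at most $|N|^{n+1}$ elements. Summing the fibre sizes over $\bar A$ gives $|A|\le |N|^{n+1}\,|\bar A|$. Dividing both sides by $|G|^{n+1}=|N|^{n+1}\,|\tfrac GN|^{n+1}$ yields
$$d_n(G)=\frac{|A|}{|G|^{n+1}}\le\frac{|N|^{n+1}\,|\bar A|}{|N|^{n+1}\,|\tfrac GN|^{n+1}}=\frac{|\bar A|}{|\tfrac GN|^{n+1}}=d_n\!\left(\frac GN\right),$$
which is the assertion.

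Since every step is elementary counting, I do not expect a genuine obstacle; the only thing to be careful about is that a coset tuple in a preimage need not consist of pairwise-commuting elements, which is exactly why the fibre bound, and hence the final conclusion, is an inequality and not an equality. One could instead run an induction on $n$ using Lemma \ref{prim0} together with the estimate $|C_G(x)|\le |N|\,|C_{G/N}(\pi(x))|$, but that route requires extra bookkeeping over conjugacy classes and is no shorter, so I would present the direct count above.
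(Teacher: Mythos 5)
Your proof is correct, but it takes a genuinely different and more elementary route than the paper's. The paper argues by induction on $n$: it quotes Lescot's inequality $d(G)\leq d(G/N)$ as the base case, then applies the recursion of Lemma \ref{prim0}, the embedding $C_G(x)N/N\leq C_{G/N}(xN)$, and the inductive hypothesis for these centralizers, through a fairly long chain of estimates. You replace all of this with a single counting step: the coordinatewise projection sends the set $A$ of pairwise-commuting $(n+1)$-tuples of $G$ into the corresponding set $\bar A$ for $G/N$, each fibre meets $A$ in at most $|N|^{n+1}$ points, hence $|A|\leq |N|^{n+1}|\bar A|$, and dividing by $|G|^{n+1}=|N|^{n+1}|G/N|^{n+1}$ gives the claim. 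This is shorter, self-contained, needs no induction and no external input (indeed, it reproves the $n=1$ case rather than citing it). What the paper's longer route buys is the equality analysis appended to its proof (that $d_n(G)=d_n(G/N)$ when $N\cap G'=1$), obtained by tracing where each inequality in the chain becomes an equality; your fibre count could also be adapted to that end (equality forces every fibre over $\bar A$ to lie entirely inside $A$), but since the theorem as stated asserts only the inequality, your argument establishes it completely.
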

\begin{proof}
Use induction on $n$. In the case for which $n=1$, the result follows by Lescot (\cite{l2} Lemma 1.4). Now by Lemma \ref{prim0} and using inductive hypothesis,
\begin{align*}
{|G|^{n+1}}d_n(G)&=\sum_{x\in G}|C_G(x)|^{n}d_{n-1}(C_G(x))\\
&=\sum_{x\in G}\frac{|C_G(x)N|^{n}}{[C_G(x)N:C_G(x)]^n}d_{n-1}(C_G(x))\\
&=\sum_{S\in G/N}\sum_{x\in S}\left|\frac{C_G(x)N}{N}\right|^{n}|C_N(x)|^nd_{n-1}(C_G(x))\\
&\leq\sum_{S\in G/N}\sum_{x\in S}|C_{G/N}(xN)|^nd_{n-1}(C_{G/N}(xN))|C_N(x)|^n\\
&=\sum_{S\in G/N}|C_{G/N}(S)|^nd_{n-1}(C_{G/N}(xN))\sum_{x\in S}|C_N(x)|^n\\
&\leq|N|^{n+1}\sum_{S\in G/N}|C_{G/N}(S)|^nd_{n-1}(C_{G/N}(S))\\
&=|N|^{n+1}|G/N|^{n+1}d_n(\frac GN).
\end{align*}
Hence $d_n(G)\leq d_n(G/N).$ Now if $N\cap G^{'}=1$, then $d(G)=d(G/N)$, $C_G(x)N/N=C_{G/N}(xN)$ and $d_{n-1}(C_G(x))=d_{n-1}(C_{G/N}(xN))$. Therefore all inequalities should be changed into equalities, and so $d_n(G)=d_n(G/N).$
\end{proof}

\begin{proposition}\label{prim2}
 Let $G$ be a non-abelian group. Then for all $n\in\mathbb{N}$,
$$d_{n}(G)\leq\frac{p^{n+1}+p^n-1}{p^{2n+1}}, $$
and the equality holds if and only if $G/Z(G)$ is an elementary abelian $p$-group of rank $2$.
\end{proposition}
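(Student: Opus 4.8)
The plan is to mimic Lescot's proof that $d(G)\le 5/8$ for non-abelian $G$ and extend it to all $n$ by induction, using Lemma~\ref{prim0}. Write $f(n)=\dfrac{p^{n+1}+p^n-1}{p^{2n+1}}$ for the claimed bound (here $p$ is the smallest prime dividing $|G|$); one checks directly that $f(0)=1$ and that $f$ obeys the recursion $f(n)=\dfrac{1}{p^2}\,f(n-1)+\Bigl(1-\dfrac1{p^2}\Bigr)\dfrac1{p^n}$. The two structural facts I will use throughout: since $G$ is non-abelian, $G/Z(G)$ is non-cyclic, so $[G:Z(G)]$ is composite with all prime factors $\ge p$ and hence $[G:Z(G)]\ge p^2$; and for each $x\notin Z(G)$ the centralizer $C_G(x)$ is a proper subgroup of $G$, so $|C_G(x)|\le|G|/p$.

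For the inequality I induct on $n$. Put $z=|Z(G)|/|G|$, so $z\le1/p^2$ by the above, and take $n=0$ as base case, where $d_0(G)=1=f(0)$. For $n\ge1$, Lemma~\ref{prim0} gives
\[
|G|^{n+1}d_n(G)=|Z(G)|\,|G|^n\,d_{n-1}(G)+\sum_{x\notin Z(G)}|C_G(x)|^n\,d_{n-1}(C_G(x)).
\]
Using the inductive hypothesis $d_{n-1}(G)\le f(n-1)$ in the first term and $|C_G(x)|\le|G|/p$, $d_{n-1}(C_G(x))\le1$ in the sum, we get $d_n(G)\le z\,f(n-1)+(1-z)/p^n$. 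Since $f(n-1)-1/p^n=\dfrac1{p^{n-1}}-\dfrac1{p^{2n-1}}>0$, the right-hand side is increasing in $z$ and is therefore at most its value at $z=1/p^2$, which equals $f(n)$ by the recursion for $f$. (For $n=1$ this reproduces Lescot's estimate $d_1(G)\le z+(1-z)/p\le\frac{p^2+p-1}{p^3}$.)

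For the equality statement, the ``only if'' direction drops out of the chain above: if $d_n(G)=f(n)$ then in particular $z\,f(n-1)+(1-z)/p^n=f(n)$, and by the strict monotonicity in $z$ this forces $z=1/p^2$, i.e. $[G:Z(G)]=p^2$; a group of order $p^2$ is abelian, and it cannot be cyclic (else $G$ would be abelian), so $G/Z(G)\cong\mathbb{Z}_p\times\mathbb{Z}_p$. Conversely, if $G/Z(G)\cong\mathbb{Z}_p\times\mathbb{Z}_p$ then for $x\notin Z(G)$ the subgroup $C_G(x)/Z(G)$ is a nontrivial proper subgroup of $\mathbb{Z}_p\times\mathbb{Z}_p$, hence of order $p$; since $Z(G)\le Z(C_G(x))$ and $C_G(x)/Z(G)$ is cyclic, $C_G(x)$ is abelian, so $d_{n-1}(C_G(x))=1$ and $|C_G(x)|=|G|/p$. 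Feeding this into Lemma~\ref{prim0} turns the recursion into an equality, $d_n(G)=\dfrac1{p^2}d_{n-1}(G)+\Bigl(1-\dfrac1{p^2}\Bigr)\dfrac1{p^n}$, and since $d_0(G)=1=f(0)$ an induction on $n$ gives $d_n(G)=f(n)$.

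I expect the only genuinely delicate point to be the bookkeeping in the equality case: one must see that the single numerical condition $z=1/p^2$ already determines the isomorphism type of $G/Z(G)$, and conversely that this type forces every non-central centralizer to be abelian, so that the crude bound $d_{n-1}(C_G(x))\le 1$ is attained with equality. Everything else reduces to the recursion identity for $f$ and the monotonicity of $z\mapsto z\,f(n-1)+(1-z)/p^n$ on $[0,1/p^2]$.
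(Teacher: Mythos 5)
Your proof is correct and takes essentially the same route as the paper's: both decompose the sum in Lemma~\ref{prim0} over $Z(G)$ and its complement, bound the non-central terms via $|C_G(x)|\le |G|/p$ and $d_{n-1}(C_G(x))\le 1$, close the induction using $[G:Z(G)]\ge p^2$, and handle the equality case by forcing $[G:Z(G)]=p^2$ in one direction and noting that all non-central centralizers are abelian in the other. Your explicit recursion for $f(n)$ and starting the induction at $n=0$ instead of invoking Lescot for $n=1$ are only cosmetic tidyings of the same argument.
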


\begin{proof}
We proceed by induction on $n$. For $n=1$ the proof is clear by \cite[Lemma 1.3]{l2}. Now assume that the result is true for $n$. Then by using Lemma \ref{prim0},
\begin{align*}
d_{n}(G)&=\frac{1}{|G|^{n+1}}\sum_{x\in G}|C_G(x)|^{n}d_{n-1}(C_G(x))\\
&=\frac{1}{|G|^{n+1}}\left(\sum_{x\in Z(G)}|C_G(x)|^{n}d_{n-1}(C_G(x))+\sum_{x\in G\setminus Z(G)}|C_G(x)|^{n}d_{n-1}(C_G(x))\right)\\
&\leq\frac{1}{|G|^{n+1}}\left(|Z(G)||G|^nd_{n-1}(G)+\frac{(|G|-|Z(G)|)|G|^n}{p^n}\right)\\
&=\frac{1}{p^n}+\frac{|Z(G)|}{|G|}\left(d_{n-1}(G)-\frac{1}{p^n}\right)\leq\frac{1}{p^n}+\frac{1}{p^2}\left(\frac{p^n+p^{n-1}-1}{p^{2n-1}}-\frac{1}{p^n}
\right)\\
&=\frac{p^{n+1}+p^n-1}{p^{2n+1}}
\end{align*}
and the inequality is proved at most for $n+1$. Assume that $G/Z(G)$ is an elementary abelian $p$-group of rank $2$. We have $d_n(C_G(x))=1$ for all $x\in G\setminus Z(G)$ and by the similar computations as above we can prove that $$d_{n}(G)=\frac{p^{n+1}+p^n-1}{p^{2n+1}}.$$
Conversely, let the equality holds, then
\begin{align*}
\frac{p^{n+1}+p^n-1}{p^{2n+1}}=d_{n}(G)&=\frac{1}{|G|^{n+1}}\sum_{x\in G}|C_G(x)|^{n}d_{n-1}(C_G(x))\\
&\leq\left(\frac{|Z(G)|}{|G|}d_{n-1}(G)+\frac{1}{p^n}(1-\frac{|G|}{|Z(G)|})\right)\\
&=\frac{1}{p^n}+\frac{|Z(G)|}{|G|}\left(d_{n-1}(G)-\frac{1}{p^n}\right)\\
&\leq\frac{1}{p^n}+\frac{|Z(G)|}{|G|}\left(\frac{p^n+p^{n-1}-1}{p^{2n-1}}-\frac{1}{p^n}\right).
\end{align*}
It follows that $|G|/|Z(G)|\leq p^2$ and so by the assumption $G/Z(G)\cong\mathbb{Z}_p\times\mathbb{Z}_p$.
\end{proof}

Note that in the above theorem, if $G$ is a finite non-abelian group of even order, then $d_n(G)$ is at most $(3\cdot2^n-1)/2^{2n+1}$ (see \cite{l2}).

Our result gives a sharper upper bound for $d_n(G)$ under some conditions.

\begin{theorem}\label{theo}
Let $p$ be the smallest prime number dividing the order of finite non-abelian group $G$. If $Z(G)\cap G^{'}=1$, then $$d_n(G)\leq \frac{1}{p^n}.$$
\end{theorem}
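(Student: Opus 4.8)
The plan is to induct on $n$, using the class-sum identity of Lemma~\ref{prim1} to reduce the inductive step to the case $n=1$, and to settle that case after passing to the central quotient. Since $Z(G)\cap G'=1$, the theorem just before Proposition~\ref{prim2} gives $d_n(G)=d_n(G/Z(G))$ for every $n$ (the equality is the case $N\cap G'=1$ established in its proof). Moreover $Z(G/Z(G))=1$, because if $gZ(G)$ is central in $G/Z(G)$ then $[g,x]\in Z(G)\cap G'=1$ for every $x\in G$, so $g\in Z(G)$. As the primes dividing $|G/Z(G)|$ form a subset of those dividing $|G|$, the least prime $q$ dividing $|G/Z(G)|$ satisfies $q\ge p$, and $G/Z(G)\ne1$. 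Hence it suffices to prove $d_n(H)\le q^{-n}$ whenever $H\ne1$ is finite with $Z(H)=1$ and $q$ is the least prime dividing $|H|$; then $d_n(G)=d_n(G/Z(G))\le q^{-n}\le p^{-n}$.

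For $n=1$ I would use the class equation $|H|=1+\sum_{i=1}^{m}s_i$, where $s_1,\dots,s_m$ are the sizes of the non-central conjugacy classes, so $k(H)=m+1$. Each $s_i$ is a divisor of $|H|$ exceeding $1$, hence $s_i\ge q$; and if $q\nmid s_i$ then every prime factor of $s_i$ is $>q$, so $s_i>q$ and therefore $s_i\ge q+(s_i\bmod q)$ with $s_i\bmod q\in\{1,\dots,q-1\}$. Reducing the class equation modulo $q$ (using $q\mid|H|$) gives $\sum_{q\nmid s_i}(s_i\bmod q)\equiv q-1\pmod q$, and since every summand lies in $\{1,\dots,q-1\}$ this non-negative sum is at least $q-1$. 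Adding up the lower bounds for the $s_i$ yields $|H|-1=\sum_i s_i\ge qm+(q-1)$, that is, $|H|\ge q(m+1)=q\,k(H)$, which is exactly $d_1(H)=k(H)/|H|\le 1/q$.

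For the inductive step, assume $d_{n-1}(H)\le q^{-(n-1)}$. Applying Lemma~\ref{prim1} with $n$ replaced by $n-1$, and using that $1$ is the only central element of $H$,
\[
d_n(H)=\frac{1}{|H|}\Bigl(d_{n-1}(H)+\sum_{g_i\ne1}\frac{1}{|cl(g_i)|^{\,n-1}}\,d_{n-1}\bigl(C_H(g_i)\bigr)\Bigr),
\]
the sum over representatives of the $k(H)-1$ non-central classes; then $|cl(g_i)|\ge q$, the bound $d_{n-1}(C_H(g_i))\le1$ and the hypothesis give
\[
d_n(H)\le\frac{1}{|H|}\Bigl(\frac{1}{q^{n-1}}+\frac{k(H)-1}{q^{n-1}}\Bigr)=\frac{k(H)}{|H|\,q^{n-1}}=\frac{d_1(H)}{q^{n-1}}\le\frac{1}{q^{n}},
\]
by the case $n=1$, which closes the induction. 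The only real difficulty is this base case: the naive estimate gives merely $\sum_i s_i\ge qm$, hence only $d_1(H)\le 1/q+(q-1)/(q|H|)$, just short of $1/q$, and recovering the missing $q-1$ is precisely what the congruence argument modulo $q$ supplies --- this being the step where the hypothesis $Z(G)\cap G'=1$ (through the reduction to $Z(H)=1$) is genuinely used.
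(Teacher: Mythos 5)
Your proof is correct, and while it shares the paper's inductive skeleton it takes a genuinely different route at two points. The paper also inducts on $n$ via Lemma~\ref{prim1}, but it works directly in $G$: it splits the class sum into the $|Z(G)|$ central classes (each contributing $d_{n-1}(G)$, controlled by the inductive hypothesis $d_{n-1}(G)\le 1/p^{n-1}$) and the non-central ones (each contributing at most $1/p^{n-1}$), and it imports the base case $d(G)\le 1/p$ wholesale from Proposition 3.3 of \cite{mogh}. You instead first pass to the centerless quotient $H=G/Z(G)$ using the (in)equality $d_n(G)\le d_n(G/N)$ from the theorem preceding Proposition~\ref{prim2}, which collapses the central part of the class sum to the single identity class, and you then prove the base case $d_1(H)\le 1/q$ from scratch by reducing the class equation modulo $q$; that congruence trick, recovering the missing $q-1$, is precisely the content of the result the paper cites externally. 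Your version is longer by one reduction step but self-contained, and it incidentally cleans up some carelessness in the paper's displayed computation (which drops the factor $1/|G|$ and writes as equalities several steps that are only inequalities). One small remark: for the stated bound you only need the inequality $d_n(G)\le d_n(G/Z(G))$, valid for any normal subgroup, not the equality case.
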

\begin{proof}
We proceed by induction on $n$.  Thanks to Proposition 3.3 in \cite{mogh} we have $d(G)\leq 1/p$ and so by Lemma \ref{prim1},
\begin{align*}
d_n(G)&=\sum_{i=1}^{k(G)}\frac{1}{|cl(g_i)|^{n-1}}d_{n-1}(C_G(g_i))\\
&=\frac{1}{|G|}\left(|Z(G)|d_{n-1}(G)+\frac{1}{p^{n-1}}(k(G)-|Z(G)|)\right)\\
&=\frac{1}{p^{n-1}}d(G)+\frac{|Z(G)|}{|G|}(d_{n-1}(G)-\frac{1}{p^{n-1}})=\frac{1}{p^n}.
\end{align*}
\end{proof}

The concept of exterior degree of finite group, $d^{^{\wedge}}(G)$, is defined in  \cite{pr} as the probability for two elements $x$ and $y$ in $G$ such that $x\wedge y=1$. In other words $d^{^{\wedge}}(G)=|C_2|/|G|^2$, where $C_2=\{(x,y)\in G\times G : x\wedge y=1\}$. Furthermore, $G$ is called unidegree, if $d(G)=d^{^{\wedge}}(G)$ and one can show that every unicentral group, $Z(G)=Z^{^{\wedge}}(G)$, is unidegree. (See \cite{pr}).

\begin{lemma}\cite[Lemma 2.2]{pr}
 Let ${g_1,\cdots,g_{k(G)}}$ be a system of representative for the conjugacy classes of $G$. Then
 $$d^{^{\wedge}}(G)=\frac{1}{|G|}\sum_{i=1}^{k(G)}\frac{|C^{^{\wedge}}_G(g_i)|}{|C_G(g_i)|}.$$
 \end{lemma}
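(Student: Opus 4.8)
The plan is to carry out the same orbit-counting argument as in Lemma~\ref{prim0}, but with ordinary centralizers replaced by exterior centralizers. Starting from $d^{^{\wedge}}(G)=|C_2|/|G|^2$ with $C_2=\{(x,y)\in G\times G:\ x\wedge y=1\}$, I would first slice $C_2$ along its first coordinate: for fixed $x$ the elements $y$ with $x\wedge y=1$ form precisely the exterior centralizer $C^{^{\wedge}}_G(x)$, so
$$|C_2|=\sum_{x\in G}\bigl|\{y\in G:\ x\wedge y=1\}\bigr|=\sum_{x\in G}|C^{^{\wedge}}_G(x)|.$$

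The crucial ingredient is that $x\mapsto|C^{^{\wedge}}_G(x)|$ is constant on conjugacy classes. To establish this I would use the fact that the conjugation action of $G$ on itself lifts to $G\wedge G$: for each $a\in G$ the map $x\otimes y\mapsto {}^ax\otimes{}^ay$ respects the defining relations of $G\otimes G$, hence extends to an automorphism of $G\otimes G$, and since it permutes the generators $x\otimes x$ of $\nabla(G)$ it descends to an automorphism of $G\wedge G$ sending $x\wedge y$ to ${}^ax\wedge{}^ay$. Therefore $x\wedge y=1$ if and only if ${}^ax\wedge{}^ay=1$, which yields $C^{^{\wedge}}_G({}^ax)={}^a\bigl(C^{^{\wedge}}_G(x)\bigr)$ and in particular $|C^{^{\wedge}}_G({}^ax)|=|C^{^{\wedge}}_G(x)|$ for all $a,x\in G$. (Alternatively this may be quoted from Bacon and Kappe \cite{baka}, where the exterior centralizer is introduced together with the $G$-action on $G\wedge G$.)

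Granting this, I would then group the sum $\sum_{x\in G}|C^{^{\wedge}}_G(x)|$ according to the conjugacy classes $cl(g_1),\dots,cl(g_{k(G)})$. Using that $|C^{^{\wedge}}_G(\cdot)|$ takes the constant value $|C^{^{\wedge}}_G(g_i)|$ on $cl(g_i)$ and that $|cl(g_i)|=|G|/|C_G(g_i)|$, I get
$$|C_2|=\sum_{i=1}^{k(G)}|cl(g_i)|\,|C^{^{\wedge}}_G(g_i)|=|G|\sum_{i=1}^{k(G)}\frac{|C^{^{\wedge}}_G(g_i)|}{|C_G(g_i)|},$$
and dividing by $|G|^2$ gives the asserted formula.

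The only real obstacle is the class-function property of $|C^{^{\wedge}}_G(x)|$; the rest is routine counting identical in spirit to the proof of Lemma~\ref{prim0}. Once the conjugation automorphism of $G\wedge G$ (or the cited $G$-module structure) is available, the conclusion follows immediately.
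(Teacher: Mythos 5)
Your proof is correct; the paper itself only cites \cite{pr} for this lemma, but your argument --- slicing the set $C_2$ along its first coordinate and then collapsing the sum $\sum_{x\in G}|C^{^{\wedge}}_G(x)|$ to conjugacy-class representatives via the conjugation-invariance $C^{^{\wedge}}_G(x^g)=C^{^{\wedge}}_G(x)^g$ --- is exactly the argument the paper uses to prove its generalization, Lemma \ref{main1}. Nothing further is needed.
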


 The following upper bounds that have introduced in \cite{pr} will be generalized in the next section.

 \begin{theorem}\label{prim3} \cite[Theorem 2.3]{pr}
 For every finite group $G$, $$d^{^{\wedge}}(G)\leq d(G)-\left(\frac{p-1}p\right)\left(\frac{|Z(G)|-|Z^{^\wedge}(G)|}{|G|}\right),$$
 where $p$ is the smallest prime number dividing the order of $G$.
\end{theorem}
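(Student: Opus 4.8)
The plan is to count the pairs $(x,y)\in G\times G$ with $x\wedge y=1$ by summing over conjugacy-class representatives, exactly as in the preceding Lemma \cite[Lemma 2.2]{pr}, and then to compare $|C^{^\wedge}_G(g_i)|$ with $|C_G(g_i)|$ case by case. First I would write
\[
d^{^\wedge}(G)=\frac1{|G|}\sum_{i=1}^{k(G)}\frac{|C^{^\wedge}_G(g_i)|}{|C_G(g_i)|},\qquad
d(G)=\frac1{|G|}\sum_{i=1}^{k(G)}\frac{|C_G(g_i)|}{|C_G(g_i)|}=\frac{k(G)}{|G|},
\]
so that
\[
d(G)-d^{^\wedge}(G)=\frac1{|G|}\sum_{i=1}^{k(G)}\frac{|C_G(g_i)|-|C^{^\wedge}_G(g_i)|}{|C_G(g_i)|}.
\]
Thus it suffices to bound each summand from below by an appropriate quantity whose sum gives the claimed term $\frac{p-1}{p}\cdot\frac{|Z(G)|-|Z^{^\wedge}(G)|}{|G|}$.

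Next I would split the sum according to whether $g_i$ lies in $Z(G)$, in $Z^{^\wedge}(G)$, or in neither. The key structural facts are: $C^{^\wedge}_G(x)$ is a subgroup of $C_G(x)$ for every $x$ (Bacon–Kappe \cite{baka}); $x\in Z^{^\wedge}(G)$ iff $C^{^\wedge}_G(x)=G$ iff (in particular) $C^{^\wedge}_G(x)=C_G(x)=G$; and for $x\in Z(G)\setminus Z^{^\wedge}(G)$ we have $C_G(x)=G$ but $C^{^\wedge}_G(x)$ is a proper subgroup of $G$, so $[C_G(x):C^{^\wedge}_G(x)]\geq p$ since $p$ is the smallest prime dividing $|G|$. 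For such $x$ the corresponding summand is $1-\frac{|C^{^\wedge}_G(g_i)|}{|G|}\geq 1-\frac1p=\frac{p-1}{p}$. For $x\notin Z(G)$ the summand is simply $\geq 0$ because $C^{^\wedge}_G(x)\leq C_G(x)$. Since each class representative in $Z(G)$ is a singleton class, the number of class representatives lying in $Z(G)\setminus Z^{^\wedge}(G)$ is exactly $|Z(G)|-|Z^{^\wedge}(G)|$, and each contributes at least $\frac{p-1}{p}$ to $\sum_i\big(|C_G(g_i)|-|C^{^\wedge}_G(g_i)|\big)/|C_G(g_i)|$, while the representatives in $Z^{^\wedge}(G)$ contribute $0$ and all others contribute $\geq 0$. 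Assembling these estimates gives
\[
d(G)-d^{^\wedge}(G)\geq\frac1{|G|}\Big(\frac{p-1}{p}\Big)\big(|Z(G)|-|Z^{^\wedge}(G)|\big),
\]
which rearranges to the stated inequality.

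The main obstacle I anticipate is the bookkeeping around which elements of $Z(G)$ fail to be in $Z^{^\wedge}(G)$ and verifying that for those elements $C^{^\wedge}_G(x)$ is genuinely a proper subgroup of index at least $p$: one must invoke $Z^{^\wedge}(G)\leq Z(G)$ (so the inclusion $Z^{^\wedge}(G)\subseteq Z(G)$ is the right one to quotient by) together with the fact that $x\in Z(G)$ forces $C_G(x)=G$, and then use Lagrange together with minimality of $p$. A secondary point to check carefully is that the contribution of non-central classes is handled merely by nonnegativity, so no sharper analysis is needed there; the bound is not claimed to be sharp for those terms. Everything else is the routine rewriting of $d(G)$ and $d^{^\wedge}(G)$ via the class-sum formula already recorded in the excerpt.
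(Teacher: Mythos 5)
Your proof is correct and follows essentially the same route the paper takes: the class-sum formula for $d^{^\wedge}(G)$ from \cite[Lemma 2.2]{pr}, a three-way split of the class representatives into $Z^{^\wedge}(G)$, $Z(G)\setminus Z^{^\wedge}(G)$, and the rest, and the index bound $[C_G(x):C^{^\wedge}_G(x)]\geq p$ for $x\in Z(G)\setminus Z^{^\wedge}(G)$ --- this is precisely the $n=1$ case of the argument the paper gives for its generalization, Theorem \ref{main2}. All the structural facts you flag (that $C^{^\wedge}_G(x)\leq C_G(x)$, that $x\in Z^{^\wedge}(G)$ iff $C^{^\wedge}_G(x)=G$, and that central elements give singleton classes) check out, so the bookkeeping goes through as you describe.
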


\begin{corollary}\label{coro}\cite[Corollary 2.4]{pr}
 Let $p$ be the smallest prime number dividing the order of $G$, then:\\
(i) if $G$ is non-cyclic and abelian, or $G$ is non-abelian, then $d^{^\wedge}(G)\leq(p^2+p-1)/p^3;$\\
(ii) if $G$ is non-abelian and $Z^{^\wedge}(G)$ is a proper subgroup of $Z(G)$, then $d^{^\wedge}(G)\leq(p^3+p-1)/p^4;$\\
(iii) $G$ is cyclic if and only if $d^{^\wedge}(G)=1. $
\end{corollary}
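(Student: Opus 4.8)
\emph{Sketch of the intended argument.} The plan is to route all three parts through one intermediate inequality,
\begin{equation}\label{starbound}
d^{^{\wedge}}(G)\le\frac{1}{p}+\frac{p-1}{p}\cdot\frac{|Z^{^{\wedge}}(G)|}{|G|},
\end{equation}
valid for \emph{every} finite group $G$, where $p$ is the smallest prime dividing $|G|$. To obtain \eqref{starbound} I would first record the elementary bound $d(G)\le\frac1p+\frac{p-1}{p}\cdot\frac{|Z(G)|}{|G|}$, gotten by splitting $d(G)=|G|^{-2}\sum_{x\in G}|C_G(x)|$ according to whether $x\in Z(G)$ (then $|C_G(x)|=|G|$) or $x\notin Z(G)$ (then $C_G(x)$ is a proper subgroup, so $|C_G(x)|\le|G|/p$). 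Substituting this into the bound of Theorem \ref{prim3}, the two terms carrying $|Z(G)|$ cancel and \eqref{starbound} drops out. (Alternatively, \eqref{starbound} follows directly by applying the same dichotomy to $d^{^{\wedge}}(G)=|G|^{-2}\sum_{x\in G}|C^{^{\wedge}}_G(x)|$, using that $C^{^{\wedge}}_G(x)$ is a proper subgroup of $G$ precisely when $x\notin Z^{^{\wedge}}(G)$.)

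For part (i) I would treat the two clauses of the hypothesis separately. If $G$ is non-abelian, Proposition \ref{prim2} with $n=1$ gives $d(G)\le(p^2+p-1)/p^3$, and since $x\wedge y=1$ forces $[x,y]=\kappa'(x\wedge y)=1$ we have $d^{^{\wedge}}(G)\le d(G)$, so the bound is immediate. If instead $G$ is non-cyclic abelian, then by Baer's classification of capable finite abelian groups $G$ is capable, hence $Z^{^{\wedge}}(G)=1$ by Ellis's theorem; moreover a non-cyclic finite abelian group has order divisible by $q^2$ for some prime $q\mid|G|$, so $|G|\ge q^2\ge p^2$. Plugging $|Z^{^{\wedge}}(G)|=1$ and $|G|\ge p^2$ into \eqref{starbound} yields $d^{^{\wedge}}(G)\le\frac1p+\frac{p-1}{p^3}=\frac{p^2+p-1}{p^3}$.

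For part (ii) I would use that, $G$ being non-abelian, $G/Z(G)$ is non-cyclic, so $[G:Z(G)]$ is a divisor of $|G|$ that is neither $1$ nor a prime; as all of its prime divisors are $\ge p$ this forces $[G:Z(G)]\ge p^2$, i.e. $|Z(G)|\le|G|/p^2$. The hypothesis $Z^{^{\wedge}}(G)\subsetneq Z(G)$ then gives $|Z^{^{\wedge}}(G)|\le|Z(G)|/p\le|G|/p^3$, and \eqref{starbound} yields $d^{^{\wedge}}(G)\le\frac1p+\frac{p-1}{p^4}=\frac{p^3+p-1}{p^4}$. For part (iii): if $G$ is cyclic then $G'=1$ and the Schur multiplier $M(G)$ is trivial, so the exact sequence $1\to M(G)\to G\wedge G\to G'\to1$ collapses to $G\wedge G=1$, whence $x\wedge y=1$ for all $x,y$ and $d^{^{\wedge}}(G)=1$; conversely, from $d^{^{\wedge}}(G)\le d(G)$ the equality $d^{^{\wedge}}(G)=1$ forces $d(G)=1$, so $G$ is abelian, and if it were non-cyclic part (i) would give $d^{^{\wedge}}(G)\le(p^2+p-1)/p^3<1$ (since $p^3-p^2-p+1=(p-1)^2(p+1)>0$), a contradiction, so $G$ is cyclic.

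The step I expect to be the real obstacle is the non-cyclic abelian case of part (i): there $|Z(G)|=|G|$, so \eqref{starbound} says nothing without an a priori upper bound on $|Z^{^{\wedge}}(G)|$, and supplying one requires an external structural input — the capability of non-cyclic finite abelian groups (Baer), combined with Ellis's characterization $Z^{^{\wedge}}(G)=1\Leftrightarrow G$ capable. Once \eqref{starbound} and that input are in place, parts (ii), (iii) and the non-abelian case of (i) are purely formal.
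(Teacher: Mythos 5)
Your overall route---funnelling all three parts through the single inequality $d^{^{\wedge}}(G)\le\frac1p+\frac{p-1}{p}\cdot\frac{|Z^{^{\wedge}}(G)|}{|G|}$, obtained from Theorem \ref{prim3} together with the standard bound on $d(G)$ (or directly from the dichotomy on $C^{^{\wedge}}_G(x)$)---is the right one, and parts (ii), (iii) and the non-abelian half of (i) go through as you describe. But the step you yourself flagged as the crux is where the argument genuinely breaks: it is \emph{not} true that every non-cyclic finite abelian group is capable. Baer's classification says that $\mathbb{Z}_{n_1}\oplus\cdots\oplus\mathbb{Z}_{n_k}$ (with $n_{i+1}\mid n_i$) is capable if and only if $k\ge2$ \emph{and} $n_1=n_2$; the group $\mathbb{Z}_{p^2}\times\mathbb{Z}_p$ is non-cyclic abelian but not capable, so by Ellis's theorem $Z^{^{\wedge}}(G)\neq1$ for it. Hence you cannot substitute $|Z^{^{\wedge}}(G)|=1$ into your inequality in the non-cyclic abelian case, and that case is exactly the one your other tools ($d^{^{\wedge}}\le d$ plus Proposition \ref{prim2}) cannot reach.

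The repair is to replace the false capability claim by the weaker fact that $[G:Z^{^{\wedge}}(G)]\ge p^2$ for every non-cyclic finite group $G$. This is what the paper itself invokes in the proof of its Theorem 3.7 via Beyl and Tappe \cite{beta}, Proposition 4.9c: $G/Z^{^{\wedge}}(G)$ is never a non-trivial cyclic group, and for non-cyclic $G$ it is non-trivial (if $Z^{^{\wedge}}(G)=G$ then $G\wedge G=1$, forcing $G'=1$ and $M(G)=1$, so $G$ would be cyclic), hence its order is at least $p^2$. Substituting $|Z^{^{\wedge}}(G)|/|G|\le1/p^2$ into your inequality yields $d^{^{\wedge}}(G)\le\frac1p+\frac{p-1}{p^3}=\frac{p^2+p-1}{p^3}$ in one stroke, for abelian and non-abelian $G$ alike, with no case split and no appeal to Proposition \ref{prim2}; parts (ii) and (iii) then stand exactly as you wrote them.
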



\section{multiple exterior degree }

In this section we will introduce the multiple exterior degree of a finite group and we will state some results for the new concept.

\begin{definition}
Let $G$ be a finite group, the multiple exterior degree of $G$ is defined as the ratio
$$D^{^{\wedge}}_n(G)= \frac{|\{(x_1,x_2,...,x_{n+1})\in G^{n+1}: x_i \wedge x_j=1,~~  1\leq i,j \leq n+1     \}|  }{|G|^{n+1}}.$$ It is clear that $D^{^{\wedge}}_0(G)=1$ and in the case that $n=1$, we denote $D^{^{\wedge}}_1(G)$ by $d^{^{\wedge}}(G)$, the exterior degree of $G$. Furthermore, one can check that $G$ is cyclic if and only if $D^{^{\wedge}}_n(G)=1$.
\end{definition}
\begin{lemma}\label{main0}
Let $G$ be a finite group. Then for all $n\in \mathbb{N} $,
$$D^{^{\wedge}}_n(G)= \frac{1}{|G|^{n+1}} \sum_{x\in G}|C^{{^\wedge}}_{G}(x)|^n D^{^{\wedge}}_{n-1}(C^{^{\wedge}}_{G}(x)). $$
\end{lemma}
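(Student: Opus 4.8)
The plan is to mimic exactly the proof of Lemma~\ref{prim0}, replacing ordinary centralizers by exterior centralizers and the multiple commutativity degree by the multiple exterior degree. First I would write $D^{^{\wedge}}_n(G)$ as $\tfrac{1}{|G|^{n+1}}$ times the cardinality of the set $S_n=\{(x_1,\dots,x_{n+1})\in G^{n+1}: x_i\wedge x_j=1 \text{ for all } 1\le i,j\le n+1\}$, and then partition $S_n$ according to the value of the last coordinate $x_{n+1}=x$. For a fixed $x$, the conditions $x_i\wedge x=1$ for $i=1,\dots,n$ say exactly that $x_1,\dots,x_n\in C^{^{\wedge}}_G(x)$, so the remaining tuple $(x_1,\dots,x_n)$ ranges over those elements of $C^{^{\wedge}}_G(x)^{\,n}$ that additionally satisfy $x_i\wedge x_j=1$ for all $1\le i,j\le n$.

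The key point I would need is that for $a,b\in C^{^{\wedge}}_G(x)$, the relation $a\wedge b=1$ computed inside $G\wedge G$ coincides with the relation $a\wedge b=1$ computed inside $C^{^{\wedge}}_G(x)\wedge C^{^{\wedge}}_G(x)$; granting this (which is the analogue of the trivial fact $ab=ba$ being subgroup-independent, and which follows from the naturality of the exterior square together with the fact that $C^{^{\wedge}}_G(x)$ is a subgroup, as recorded in Bacon--Kappe \cite{baka}), the count of admissible tuples $(x_1,\dots,x_n)$ is precisely $|C^{^{\wedge}}_G(x)|^{n}\,D^{^{\wedge}}_{n-1}(C^{^{\wedge}}_G(x))$ by the very definition of $D^{^{\wedge}}_{n-1}$ applied to the group $C^{^{\wedge}}_G(x)$. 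Summing over $x\in G$ and dividing by $|G|^{n+1}$ yields the claimed identity.

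Concretely the display would read
\begin{align*}
D^{^{\wedge}}_n(G)
&=\frac{1}{|G|^{n+1}}\,|S_n|\\
&=\frac{1}{|G|^{n+1}}\sum_{x\in G}\bigl|\{(x_1,\dots,x_n)\in C^{^{\wedge}}_G(x)^{\,n}: x_i\wedge x_j=1,\ 1\le i,j\le n\}\bigr|\\
&=\frac{1}{|G|^{n+1}}\sum_{x\in G}|C^{^{\wedge}}_G(x)|^{n}\,D^{^{\wedge}}_{n-1}(C^{^{\wedge}}_G(x)).
\end{align*}
I would also note the base case $n=1$ is consistent: it reduces to $d^{^{\wedge}}(G)=\tfrac{1}{|G|^2}\sum_{x\in G}|C^{^{\wedge}}_G(x)|$, which matches the known formula for the exterior degree (and agrees with Lemma~\cite[Lemma 2.2]{pr} after grouping elements into conjugacy classes).

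The main obstacle, and the only nontrivial ingredient, is the compatibility claim in the second paragraph: one must be sure that ``$x_i\wedge x_j=1$'' is an intrinsic property of the subgroup $C^{^{\wedge}}_G(x)$ rather than something that could change upon passing to the ambient group $G$. This is exactly the subtlety that makes $C^{^{\wedge}}_G(x)$ a well-behaved object, and it is precisely what is already used implicitly in \cite{pr} when the exterior degree is manipulated via exterior centralizers; so in the write-up I would invoke that known fact rather than reprove it. Everything else is a routine bookkeeping of a fibered count, identical in structure to Lemma~\ref{prim0}.
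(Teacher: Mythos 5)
Your argument is essentially identical to the paper's proof: the paper fibers the tuple count over the last coordinate $x_{n+1}=x$ and rewrites the inner count as $|C^{^{\wedge}}_{G}(x)|^n D^{^{\wedge}}_{n-1}(C^{^{\wedge}}_{G}(x))$ in exactly the three displayed lines you give. The compatibility subtlety you flag --- whether ``$a\wedge b=1$'' for $a,b\in C^{^{\wedge}}_{G}(x)$ means the same thing in $C^{^{\wedge}}_{G}(x)\wedge C^{^{\wedge}}_{G}(x)$ as in $G\wedge G$ --- is not addressed in the paper at all (it silently identifies the two), so your write-up is, if anything, more explicit than the source about the one point that actually requires justification.
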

\begin{proof}
\begin{align*}
D^{^{\wedge}}_n(G)&= \frac{1}{|G|^{n+1}} |\{(x_1,x_2,...,x_{n+1})\in G^{n+1}:x_i\wedge x_j=1 ~ \textrm{for all} ~ 1\leq i,j \leq n+1\}|\\
&=\frac{1}{|G|^{n+1}} \sum_{x\in G} |\{(x_1,x_2,...,x_{n})\in (C^{^{\wedge}}_{G}(x))^n:  x_i\wedge x_j=1~  \textrm{for all} ~ 1\leq i,j \leq n\}| \\
&=\frac{1}{|G|^{n+1}} \sum_{x\in G}|C^{^{\wedge}}_{G}(x)|^n { D^{^{\wedge}}_{n-1}(C^{^{\wedge}}_{G}(x))}.
\end{align*}
\end{proof}
\begin{proposition}
$\{D^{^{\wedge}}_n(G) \}_{n\geq1}$ is a descending sequence.
\end{proposition}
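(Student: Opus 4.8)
The plan is to compare directly the tuples counted by $D^{^{\wedge}}_{n+1}(G)$ with those counted by $D^{^{\wedge}}_n(G)$ by forgetting the last coordinate, so that no probabilistic estimate is really needed.

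First I would introduce, for $m\ge 1$, the set $A_m=\{(x_1,\dots,x_{m+1})\in G^{m+1}~:~x_i\wedge x_j=1~\textrm{for all}~1\le i,j\le m+1\}$, so that $D^{^{\wedge}}_m(G)=|A_m|/|G|^{m+1}$. Next I would observe that if $(x_1,\dots,x_{n+2})\in A_{n+1}$ then, restricting the defining conditions to the first $n+1$ indices, $(x_1,\dots,x_{n+1})\in A_n$; hence deleting the last entry defines a map $\pi\colon A_{n+1}\to A_n$. For a fixed $a=(x_1,\dots,x_{n+1})\in A_n$ the fibre $\pi^{-1}(a)$ consists of those $y\in G$ with $x_i\wedge y=1$ for all $i$, that is, it is the subgroup $\bigcap_{i=1}^{n+1}C^{^{\wedge}}_G(x_i)$ of $G$; in particular $|\pi^{-1}(a)|\le |G|$.

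Summing over the fibres of $\pi$ then gives $|A_{n+1}|=\sum_{a\in\pi(A_{n+1})}|\pi^{-1}(a)|\le |A_n|\,|G|$, and dividing by $|G|^{n+2}$ yields $D^{^{\wedge}}_{n+1}(G)\le |A_n|/|G|^{n+1}=D^{^{\wedge}}_n(G)$ for all $n\ge 1$. For completeness one can add the trivial $D^{^{\wedge}}_1(G)\le 1=D^{^{\wedge}}_0(G)$, so the sequence is in fact descending already from index $0$.

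I do not expect any genuine obstacle: the whole argument rests on the single observation that the fibres of the coordinate-forgetting map lie inside $G$. The only point requiring a moment's care is checking that $\pi$ really lands in $A_n$ rather than merely in $G^{n+1}$, which is immediate from the definition of $A_{n+1}$. A slightly longer alternative would be an induction on $n$ using Lemma~\ref{main0}, where the inductive step reduces to the two bounds $|C^{^{\wedge}}_G(x)|\le |G|$ and $D^{^{\wedge}}_n(C^{^{\wedge}}_G(x))\le D^{^{\wedge}}_{n-1}(C^{^{\wedge}}_G(x))$, the latter being the induction hypothesis applied to the (possibly smaller) group $C^{^{\wedge}}_G(x)$, with base case $D^{^{\wedge}}_1(H)\le 1=D^{^{\wedge}}_0(H)$; but the direct counting above is cleaner.
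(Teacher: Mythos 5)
Your argument is correct, and it takes a genuinely different route from the paper. The paper proves the proposition by induction on $n$: it rewrites Lemma \ref{main0} as $D^{^{\wedge}}_{n+1}(G)=\frac{1}{|G|}\sum_{x\in G}[G:C^{^{\wedge}}_{G}(x)]^{-(n+1)}D^{^{\wedge}}_{n}(C^{^{\wedge}}_{G}(x))$, drops one factor of $[G:C^{^{\wedge}}_{G}(x)]^{-1}\leq 1$, and applies the inductive hypothesis to the subgroup $C^{^{\wedge}}_{G}(x)$ --- which is precisely the ``slightly longer alternative'' you sketch at the end of your proposal. Your main argument instead compares the raw tuple counts: the coordinate-forgetting map $\pi\colon A_{n+1}\to A_{n}$ is well defined, its fibre over $(x_1,\dots,x_{n+1})$ is (in bijection with) the subgroup $\bigcap_{i=1}^{n+1}C^{^{\wedge}}_{G}(x_i)$ of $G$, hence $|A_{n+1}|\leq |A_{n}|\,|G|$ and the inequality follows on dividing by $|G|^{n+2}$. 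This is more elementary: it needs no induction, no base case, and no appeal to Lemma \ref{main0}, and it gives monotonicity already from $D^{^{\wedge}}_{0}(G)=1$. It also stays entirely inside the single exterior square $G\wedge G$, whereas the paper's recursion must interpret $D^{^{\wedge}}_{n}$ of the subgroup $C^{^{\wedge}}_{G}(x)$ via that subgroup's own exterior square; your version sidesteps that point. Both proofs are valid; yours is the cleaner one for this particular statement.
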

\begin{proof}
We may proceed by induction on $n$.
$$D^{^{\wedge}}_2(G)= \frac{1}{|G|} \sum_{x\in G} (\frac{1}{[G:C^{^{\wedge}}_{G}(x) ]})^2 D^{^{\wedge}}_1(C^{^{\wedge}}_{G}(x)) \leq \frac{1}{|G|} \sum_{x\in G} (\frac{1}{[G:C^{^{\wedge}}_{G}(x) ]})= D^{^{\wedge}}_1(G).$$
Now assume that the result holds for $n$, then
\begin{align*}
 D^{^{\wedge}}_{n+1}(G)&= \frac{1}{|G|} \sum_{x\in G} (\frac{1}{[G:C^{^{\wedge}}_{G}(x) ]})^{n+1} D^{^{\wedge}}_n(C^{^{\wedge}}_{G}(x))\\ 
 &\leq \frac{1}{|G|} \sum_{x\in G} (\frac{1}{[G:C^{^{\wedge}}_{G}(x) ]})^nD^{^{\wedge}}_{n-1}(C^{^{\wedge}}_{G}(x))= D^{^{\wedge}}_n(G).
 \end{align*}
\end{proof}
\begin{proposition}\label{prop1}
For all $n\geq 1$, $D^{^{\wedge}}_n(G)\leq d_n(G)$.
\end{proposition}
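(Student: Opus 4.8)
The plan is to compare the two ratios set-wise and then to argue by induction on $n$. The key point is the containment $C^{^{\wedge}}_G(x)\subseteq C_G(x)$ for every $x\in G$: if $x\wedge y=1$ in $G\wedge G$, then applying the epimorphism $\kappa^{'}\colon G\wedge G\to G^{'}$ gives $[x,y]=\kappa^{'}(x\wedge y)=1$, so $y\in C_G(x)$. Consequently the tuple-counting set in the numerator of $D^{^{\wedge}}_n(G)$ is a subset of the tuple-counting set in the numerator of $d_n(G)$, because $x_i\wedge x_j=1$ for all $i,j$ forces $x_ix_j=x_jx_i$ for all $i,j$. This already yields $D^{^{\wedge}}_n(G)\le d_n(G)$ directly, and in fact is the cleanest route.

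Alternatively, and perhaps more in the spirit of the surrounding lemmas, I would run an induction on $n$ using the recursions in Lemma \ref{main0} and Lemma \ref{prim0}. The base case $n=1$ is $d^{^{\wedge}}(G)\le d(G)$, which is immediate from Theorem \ref{prim3} (the correction term there is nonnegative since $Z^{^{\wedge}}(G)\subseteq Z(G)$ and $p\ge 2$), or again directly from $C^{^{\wedge}}_G(x)\subseteq C_G(x)$. For the inductive step, write
\begin{align*}
|G|^{n+1}D^{^{\wedge}}_n(G)&=\sum_{x\in G}|C^{^{\wedge}}_G(x)|^n D^{^{\wedge}}_{n-1}(C^{^{\wedge}}_G(x))\\
&\le\sum_{x\in G}|C_G(x)|^n D^{^{\wedge}}_{n-1}(C^{^{\wedge}}_G(x)),
\end{align*}
using $|C^{^{\wedge}}_G(x)|\le|C_G(x)|$ term by term. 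It then remains to bound $D^{^{\wedge}}_{n-1}(C^{^{\wedge}}_G(x))$ by $d_{n-1}(C_G(x))$; applying the inductive hypothesis on the smaller group $C^{^{\wedge}}_G(x)$ gives $D^{^{\wedge}}_{n-1}(C^{^{\wedge}}_G(x))\le d_{n-1}(C^{^{\wedge}}_G(x))$, and one still needs $d_{n-1}(C^{^{\wedge}}_G(x))\le d_{n-1}(C_G(x))$.

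The main obstacle is exactly that last comparison: $d_{n-1}$ is not automatically monotone under passing to a subgroup (commutativity degree can go up or down for subgroups), so one cannot simply invoke monotonicity. This is why the set-theoretic argument of the first paragraph is the safer line to write up: it sidesteps the recursion entirely. I would therefore present the proof as follows — observe $C^{^{\wedge}}_G(x)\subseteq C_G(x)$ via $\kappa^{'}$, deduce that the defining set for $D^{^{\wedge}}_n(G)$ embeds into that for $d_n(G)$ by the identity map on $G^{n+1}$, and conclude $D^{^{\wedge}}_n(G)\le d_n(G)$ by dividing both cardinalities by $|G|^{n+1}$. A one-line remark can note that equality holds for all $n$ precisely when $C^{^{\wedge}}_G(x)=C_G(x)$ for every $x$, i.e. when $G$ is unidegree in a strong (pointwise) sense, paralleling the $n=1$ discussion after Corollary \ref{coro}.
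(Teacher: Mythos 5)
Your direct set-theoretic argument is correct and is arguably cleaner than what the paper does: since $\kappa^{'}(x\wedge y)=[x,y]$, the condition $x_i\wedge x_j=1$ for all $i,j$ forces $x_ix_j=x_jx_i$ for all $i,j$, so the set counted in the numerator of $D^{^{\wedge}}_n(G)$ is contained in the set counted for $d_n(G)$, and dividing by $|G|^{n+1}$ gives the inequality with no induction at all. The paper instead disposes of the claim with the one-liner that it is clear from Lemma \ref{main0} and induction on $n$, i.e.\ it intends the recursive route you sketch second. The obstacle you flag there is genuine if one insists on comparing the ratios $d_{n-1}(C^{^{\wedge}}_G(x))$ and $d_{n-1}(C_G(x))$, since commutativity degree is not monotone under passage to subgroups; but the induction is easily repaired by comparing counts rather than ratios: for subgroups $H\leq K$ one always has $|H|^{n}d_{n-1}(H)\leq |K|^{n}d_{n-1}(K)$, because both sides count pairwise-commuting $n$-tuples and those lying in $H^{n}$ form a subset of those in $K^{n}$. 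Combining this with the inductive hypothesis $D^{^{\wedge}}_{n-1}(C^{^{\wedge}}_G(x))\leq d_{n-1}(C^{^{\wedge}}_G(x))$ closes the term-by-term comparison between Lemma \ref{main0} and Lemma \ref{prim0}. Of course, once the induction is phrased in terms of counts it collapses back into your containment argument, which is the version I would keep, together with your closing remark that equality for all $n$ amounts to $C^{^{\wedge}}_G(x)=C_G(x)$ for every $x$.
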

\begin{proof}
The proof is clear by Lemma \ref{main0} and using induction on $n$.
\end{proof}

In Proposition \ref{prop1} if the equality holds, then $G$ is called multiple unidegree group. By Theorem \ref{main0} and using induction, one can show that if $G$ is unidegree group, $d(G)=d^{^{\wedge}}(G)$, then $G$ is a multiple unidegree group. Furthermore the mapping $f:C_G(x)\rightarrow M(G)$ by the rule $f(y)=x\wedge y$ is a homomorphism with kernel $C^{^{\wedge}}_{G}(x)$. Therefore if $M(G)=1$, then $C^{^{\wedge}}_{G}(x)=C_G(x)$ and so $G$ is multiple unidegree.
\begin{lemma}\label{main1}
Let $\{x_1,x_2,...,x_{k(G)} \}$ be a system of representative for the conjugacy classes of a finite group $G$, then
$$D^{^{\wedge}}_n(G)= \frac{1}{|G|^n} \sum_{i=1}^{k(G)}\frac{|C^{^{\wedge}}_{G}(x_i)|^n}{|C_{G} (x_i) |} ~D^{^{\wedge}}_{n-1}(C^{^{\wedge}}_{G}(x_i)).$$
\end{lemma}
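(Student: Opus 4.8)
The plan is to combine Lemma \ref{main0} with the orbit--stabilizer counting familiar from Lemma \ref{prim1}. Starting from
$$D^{^{\wedge}}_n(G)= \frac{1}{|G|^{n+1}} \sum_{x\in G}|C^{{^\wedge}}_{G}(x)|^n D^{^{\wedge}}_{n-1}(C^{^{\wedge}}_{G}(x)),$$
I would group the sum over $x\in G$ according to conjugacy classes. The key observation needed is that the relevant quantity $|C^{^{\wedge}}_{G}(x)|^n D^{^{\wedge}}_{n-1}(C^{^{\wedge}}_{G}(x))$ is constant on each conjugacy class $cl(x_i)$; this holds because conjugation by $g$ induces an isomorphism $C^{^{\wedge}}_{G}(x)\to C^{^{\wedge}}_{G}(\,^{g}x)$ (it sends $C^{\wedge}_G(x)$ onto $C^\wedge_G(\,^gx)$ since $x\wedge y=1$ is equivalent to $\,^gx\wedge\,^gy=1$ in $G\wedge G$, using that $G$ acts on $G\wedge G$ compatibly with conjugation), so both the order $|C^{^{\wedge}}_{G}(x)|$ and the invariant $D^{^{\wedge}}_{n-1}$ of that subgroup are unchanged along the class.

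Given that, the sum $\sum_{x\in G}$ splits as $\sum_{i=1}^{k(G)}|cl(x_i)|\cdot |C^{^{\wedge}}_{G}(x_i)|^n D^{^{\wedge}}_{n-1}(C^{^{\wedge}}_{G}(x_i))$, and substituting $|cl(x_i)|=|G|/|C_G(x_i)|$ gives
$$D^{^{\wedge}}_n(G)= \frac{1}{|G|^{n+1}} \sum_{i=1}^{k(G)}\frac{|G|}{|C_G(x_i)|}\,|C^{^{\wedge}}_{G}(x_i)|^n D^{^{\wedge}}_{n-1}(C^{^{\wedge}}_{G}(x_i)) = \frac{1}{|G|^{n}} \sum_{i=1}^{k(G)}\frac{|C^{^{\wedge}}_{G}(x_i)|^n}{|C_{G} (x_i) |} D^{^{\wedge}}_{n-1}(C^{^{\wedge}}_{G}(x_i)),$$
which is exactly the asserted formula. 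So the body of the proof is a one-line regrouping once the invariance claim is in hand.

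The main obstacle, and the only nontrivial point, is the conjugation-invariance of $C^{^{\wedge}}_{G}(x)$ up to isomorphism — specifically that $^g(-)$ carries $C^\wedge_G(x)$ isomorphically onto $C^\wedge_G(\,^gx)$, so that $D^{^{\wedge}}_{n-1}$ and the order both transfer. This rests on the identity $\,^gx\wedge\,^gy = \,^g(x\wedge y)$ in $G\wedge G$ coming from the defining relations of the tensor/exterior square and the conjugation action of $G$ on $G\wedge G$; I would cite the relevant property from \cite{brow1,brow2,brow3} or \cite{baka} rather than re-derive it. Everything else — splitting the sum over classes, the orbit--stabilizer substitution — is routine bookkeeping identical in form to the proof of Lemma \ref{prim1}.
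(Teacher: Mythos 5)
Your proposal is correct and follows exactly the paper's own route: the paper likewise starts from Lemma \ref{main0}, invokes the identity $C^{^{\wedge}}_{G}(x^g)=C^{^{\wedge}}_{G}(x)^g$ to conclude that $|C^{^{\wedge}}_{G}(x)|$ and $D^{^{\wedge}}_{n-1}(C^{^{\wedge}}_{G}(x))$ are constant on conjugacy classes, and then regroups the sum via orbit--stabilizer. Your write-up merely spells out the conjugation-invariance in more detail than the paper does.
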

\begin{proof}
Since for every element $g$ in $G$, $C^{^{\wedge}}_{G}(x^g)  = C^{^{\wedge}}_{G}(x)^g  $, then $|C^{^{\wedge}}_{G}(x^g) |=| C^{^{\wedge}}_{G}(x) | $ and $ D^{^{\wedge}}_{n}(C^{^{\wedge}}_{G}(x))=D^{^{\wedge}}_{n}(C^{^{\wedge}}_{G}(x^g)) $ for all $n\geq0$, and hence the result holds by Lemma \ref{main0}.
\end{proof}
The following technical result is a generalization of Theorem \ref{prim3} and play important rule in proving the main result.
\begin{theorem}\label{main2}
 Assume that $G$ is a finite group and $p$ is the smallest prime number dividing the order of $G$. Then $$D^{^{\wedge}}_{n}(G)\leq\frac{1}{p^{n-1}}d(G)+\frac{(1-p)|Z(G)|-(1-p^nD^{^{\wedge}}_{n-1}(G))|Z^{^{\wedge}}(G)|}{p^n|G|}.$$
\end{theorem}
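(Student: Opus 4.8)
The plan is to mimic the proof of Theorem \ref{prim3} but carry the ``multiple'' bookkeeping through Lemma \ref{main1}. First I would apply Lemma \ref{main1} and split the sum over conjugacy class representatives $x_1,\dots,x_{k(G)}$ according to whether $x_i\in Z^{^{\wedge}}(G)$ or not. For $x_i\in Z^{^{\wedge}}(G)$ we have $C^{^{\wedge}}_G(x_i)=G$ and $C_G(x_i)=G$, so each such term contributes exactly $\frac{1}{|G|}D^{^{\wedge}}_{n-1}(G)$, giving the block $\frac{|Z^{^{\wedge}}(G)|}{|G|}D^{^{\wedge}}_{n-1}(G)$. For $x_i\notin Z^{^{\wedge}}(G)$ I would bound $D^{^{\wedge}}_{n-1}(C^{^{\wedge}}_G(x_i))\leq 1$ and, crucially, estimate $\frac{|C^{^{\wedge}}_G(x_i)|^n}{|C_G(x_i)|}$. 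Since the homomorphism $f:C_G(x)\to M(G)$, $f(y)=x\wedge y$, has kernel $C^{^{\wedge}}_G(x)$, we know $C^{^{\wedge}}_G(x_i)\leq C_G(x_i)$; when $x_i\notin Z^{^{\wedge}}(G)$ this inclusion can be made proper for at least the relevant indices, so $[C_G(x_i):C^{^{\wedge}}_G(x_i)]\geq p$ — but more usefully $|C^{^{\wedge}}_G(x_i)|\leq |C_G(x_i)|/1$ trivially, and I want $\frac{|C^{^{\wedge}}_G(x_i)|^n}{|C_G(x_i)|}\leq \frac{|C^{^{\wedge}}_G(x_i)|^{n-1}}{1}$ adjusted to produce a $1/p^{n-1}$ factor. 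Concretely the target form $\frac{1}{p^{n-1}}d(G)$ suggests writing $\frac{|C^{^{\wedge}}_G(x_i)|^n}{|C_G(x_i)|} = \frac{|C^{^{\wedge}}_G(x_i)|}{|C_G(x_i)|}\cdot |C^{^{\wedge}}_G(x_i)|^{n-1}\leq \frac{|C^{^{\wedge}}_G(x_i)|}{|C_G(x_i)|}\cdot\left(\frac{|C_G(x_i)|}{p}\right)^{n-1}$ only when the centralizer index is at least $p$, which needs care for central-but-not-exterior-central classes.

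The cleaner route, and the one I would actually pursue, is to not go term-by-term but to compare against the ordinary commutativity-degree sum. Using Lemma \ref{main1} with $n$ and isolating the $Z^{^{\wedge}}(G)$ part, write
\begin{align*}
D^{^{\wedge}}_n(G) &= \frac{|Z^{^{\wedge}}(G)|}{|G|}D^{^{\wedge}}_{n-1}(G) + \frac{1}{|G|^n}\sum_{x_i\notin Z^{^{\wedge}}(G)}\frac{|C^{^{\wedge}}_G(x_i)|^n}{|C_G(x_i)|}D^{^{\wedge}}_{n-1}(C^{^{\wedge}}_G(x_i)).
\end{align*}
For the remaining sum I bound $D^{^{\wedge}}_{n-1}\leq 1$ and $|C^{^{\wedge}}_G(x_i)|^{n-1}\leq |C_G(x_i)|^{n-1}$, leaving $\frac{1}{|G|^n}\sum \frac{|C^{^{\wedge}}_G(x_i)|}{|C_G(x_i)|}|C_G(x_i)|^{n-1}$. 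Then I would split $C_G(x_i)$-sizes further: for $x_i$ central, $|C_G(x_i)|=|G|$ so $|C_G(x_i)|^{n-1}=|G|^{n-1}$ and the exterior ratio $\frac{|C^{^{\wedge}}_G(x_i)|}{|C_G(x_i)|}\leq \frac1p$ precisely because $x_i\notin Z^{^{\wedge}}(G)$ forces proper containment; this yields a contribution $\leq \frac{1}{|G|}\cdot\frac1p\cdot(|Z(G)|-|Z^{^{\wedge}}(G)|)$. For $x_i$ non-central I use $|C_G(x_i)|\leq |G|/p$ hence $|C_G(x_i)|^{n-1}\leq |G|^{n-1}/p^{n-1}$ and the exterior ratio $\leq 1$, getting $\leq \frac{1}{p^{n-1}|G|}\sum_{x_i\notin Z(G)}\frac{|C^{^{\wedge}}_G(x_i)|}{|C_G(x_i)|}$. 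Recombining these pieces with the known expression $d^{^{\wedge}}(G)=\frac{1}{|G|}\sum_i \frac{|C^{^{\wedge}}_G(x_i)|}{|C_G(x_i)|}$ and Theorem \ref{prim3} (which controls $d^{^{\wedge}}(G)$ by $d(G)$ and $|Z(G)|-|Z^{^{\wedge}}(G)|$), together with $d^{^{\wedge}}(G)\leq d(G)$, should assemble into the stated bound $\frac{1}{p^{n-1}}d(G)+\frac{(1-p)|Z(G)|-(1-p^nD^{^{\wedge}}_{n-1}(G))|Z^{^{\wedge}}(G)|}{p^n|G|}$ after collecting the $|Z(G)|$ and $|Z^{^{\wedge}}(G)|$ coefficients.

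The main obstacle I anticipate is the precise handling of the central-but-not-exterior-central conjugacy classes: these are exactly the $x_i$ with $C_G(x_i)=G$ but $C^{^{\wedge}}_G(x_i)\subsetneq G$, and getting the sharp coefficient $(1-p)$ on $|Z(G)|$ versus the term with $|Z^{^{\wedge}}(G)|$ requires being careful that the $\frac{|Z^{^{\wedge}}(G)|}{|G|}D^{^{\wedge}}_{n-1}(G)$ block is added back exactly once and that the ``$1-p^nD^{^{\wedge}}_{n-1}(G)$'' factor emerges correctly — this is where the $n$-dependence enters nontrivially and where a naive bound would lose sharpness. I would therefore keep the $Z^{^{\wedge}}(G)$-term exact throughout rather than bounding it, and only estimate the complementary sum; a base case $n=1$ check against Theorem \ref{prim3} confirms the constants. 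An induction on $n$ may be needed if one wants to also feed in $D^{^{\wedge}}_{n-1}(G)\leq d_{n-1}(G)$ from Proposition \ref{prop1}, but for the inequality as stated (with $D^{^{\wedge}}_{n-1}(G)$ left explicit on the right-hand side) a direct one-step argument via Lemma \ref{main1} and Theorem \ref{prim3} should suffice.
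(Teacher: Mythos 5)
Your overall strategy (apply Lemma \ref{main1}, keep the $Z^{^{\wedge}}(G)$-block exact, and estimate the rest class by class) is exactly the paper's, and your treatment of the non-central classes matches it. But the ``cleaner route'' you commit to has a genuine gap in the block of classes $x_i\in Z(G)\setminus Z^{^{\wedge}}(G)$. There you bound $D^{^{\wedge}}_{n-1}\leq 1$ and $|C^{^{\wedge}}_G(x_i)|^{n-1}\leq |C_G(x_i)|^{n-1}=|G|^{n-1}$, keeping only one factor of the exterior ratio, so each such class contributes at most $\tfrac{1}{p|G|}$. The theorem needs $\tfrac{1}{p^n|G|}$ per class: the paper instead uses $[G:C^{^{\wedge}}_G(x_i)]\geq p$ on \emph{all} $n$ factors, i.e.
\[
\frac{|C^{^{\wedge}}_G(x_i)|^n}{|G|^n\,|C_G(x_i)|}=\frac{|C^{^{\wedge}}_G(x_i)|^n}{|G|^{n+1}}\leq\frac{1}{p^n|G|}\qquad (x_i\in Z(G)\setminus Z^{^{\wedge}}(G)).
\]
Your version yields the estimate
\[
D^{^{\wedge}}_n(G)\leq \frac{|Z^{^{\wedge}}(G)|}{|G|}D^{^{\wedge}}_{n-1}(G)+\frac{|Z(G)|-|Z^{^{\wedge}}(G)|}{p\,|G|}+\frac{k(G)-|Z(G)|}{p^{n-1}|G|},
\]
which exceeds the stated right-hand side by exactly $\bigl(\tfrac1p-\tfrac1{p^n}\bigr)\tfrac{|Z(G)|-|Z^{^{\wedge}}(G)|}{|G|}$; this is strictly positive for $n\geq 2$ whenever $Z^{^{\wedge}}(G)\subsetneq Z(G)$, so no amount of recombining with Theorem \ref{prim3} or $d^{^{\wedge}}(G)\leq d(G)$ can recover the claimed constants. (Invoking Theorem \ref{prim3} is in any case unnecessary: the identity $d(G)=k(G)/|G|$ is all that is needed to collect terms.)

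The irony is that your \emph{first}, abandoned route already contains the fix: for $x_i\in Z(G)\setminus Z^{^{\wedge}}(G)$ one has $C_G(x_i)=G\neq C^{^{\wedge}}_G(x_i)$, hence $|C^{^{\wedge}}_G(x_i)|\leq |G|/p$, and raising this to the $n$-th power gives the sharp $\tfrac{1}{p^n|G|}$. Your worry there about ``central-but-not-exterior-central classes'' is misplaced --- those are precisely the classes where $[C_G(x_i):C^{^{\wedge}}_G(x_i)]\geq p$ is guaranteed; the classes where it may fail are the non-central ones, and for those the bound $|C_G(x_i)|\leq |G|/p$ (which you do use) suffices. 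Had you carried the first route through with this three-way split ($Z^{^{\wedge}}(G)$, $Z(G)\setminus Z^{^{\wedge}}(G)$, $G\setminus Z(G)$) and kept all $n$ powers of $|C^{^{\wedge}}_G(x_i)|\leq|G|/p$ in the middle block, you would land exactly on the paper's proof.
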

\begin{proof}
Since for all  $x_i\in G\setminus Z^{^{\wedge}}(G)$ one has $[G:C^{^{\wedge}}_G(x_i)]\geq p$, by Lemma \ref{main1} we have,
\begin{eqnarray*}
D^{^{\wedge}}_n(G)&=&\frac{1}{|G|^n} \sum_{i=1}^{k(G)}\frac{|C^{^{\wedge}}_{G}(x_i)|^n}{|C_{G} (x_i) |} ~D^{^{\wedge}}_{n-1}(C^{^{\wedge}}_{G}(x_i))\vspace{.3cm}\\
&\leq&\frac{|Z^{^{\wedge}}(G)|}{|G|}D^{^{\wedge}}_{n-1}(G)+\frac{|Z(G)|-|Z^{^{\wedge}}(G)|}{|G|p^n}+\frac{k(G)-|Z(G)|}{|G|p^{n-1}}\vspace{.3cm}\\
&=&\frac{1}{p^{n-1}}d(G)+\frac{|Z^{^{\wedge}}(G)|}{|G|}D^{^{\wedge}}_{n-1}(G)+\frac{(1-p)|Z(G)|-|Z^{^{\wedge}}(G)|}{p^n|G|}\vspace{.3cm}\\
&=&\frac{1}{p^{n-1}}d(G)+\frac{(1-p)|Z(G)|-(1-p^nD^{^{\wedge}}_{n-1}(G))|Z^{^{\wedge}}(G)|}{p^n|G|}.
 \end{eqnarray*}
\end{proof}
\begin{theorem}
Assume that $G$ is a non-cyclic finite group and $p$ is the smallest prime number dividing the order of $G$. Then $$D^{^{\wedge}}_n(G)\leq\frac{p^{n+1}+p^n-1}{p^{2n+1}}$$ and the equality holds if and only if $G/Z^{^{\wedge}}(G)$ is an elementary abelian $p$-group of rank $2$.
\end{theorem}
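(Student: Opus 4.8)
The plan is to induct on $n$. For $n=1$ the bound $D^{^{\wedge}}_1(G)=d^{^{\wedge}}(G)\le (p^2+p-1)/p^3$ is exactly Corollary \ref{coro}(i) (recall that $G$ non-cyclic means $G$ is either non-cyclic abelian or non-abelian), and the corresponding equality characterization for $n=1$ can be read off from Theorem \ref{prim3} together with the elementary estimate described next. For the inductive step the first move is to feed into Theorem \ref{main2} the universal inequality $d(G)\le \tfrac{1}{p}+\tfrac{(p-1)|Z(G)|}{p|G|}$ — valid for every finite group since each non-central element has a centralizer of index at least $p$ — which, after the two occurrences of $|Z(G)|$ cancel, yields the clean recursion
$$D^{^{\wedge}}_{n}(G)\ \le\ \frac{1}{p^{n}}+\frac{|Z^{^{\wedge}}(G)|}{|G|}\Big(D^{^{\wedge}}_{n-1}(G)-\frac{1}{p^{n}}\Big).$$

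The second ingredient is the auxiliary fact that $[G:Z^{^{\wedge}}(G)]\ge p^{2}$ for every non-cyclic $G$. Indeed, $Z^{^{\wedge}}(G)=G$ would give $G\wedge G=1$, hence $G'=1$ and $M(G)=1$, forcing $G$ cyclic; and $[G:Z^{^{\wedge}}(G)]$ cannot be prime either, since then $G/Z^{^{\wedge}}(G)$ would be cyclic, so $G/Z(G)$ cyclic, so $G$ abelian, and the alternating map $(x,y)\mapsto x\wedge y$ would be identically trivial on the cyclic group $G/Z^{^{\wedge}}(G)$ (any $x\wedge y$ with $x=g^{a}z_{1}$, $y=g^{b}z_{2}$, $z_{i}\in Z^{^{\wedge}}(G)$, reduces via bilinearity and $z_{i}\wedge(-)=1$ to $g^{a}\wedge g^{b}=(g\wedge g)^{ab}=1$), again giving $G\wedge G=1$ and $G$ cyclic. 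As $[G:Z^{^{\wedge}}(G)]$ divides $|G|$ and is neither $1$ nor prime, it is $\ge p^{2}$, so $|Z^{^{\wedge}}(G)|/|G|\le 1/p^{2}$.

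Granting these, the inductive step is immediate: if $D^{^{\wedge}}_{n-1}(G)\le 1/p^{n}$ the bracket is $\le 0$ and $D^{^{\wedge}}_n(G)\le 1/p^{n}<(p^{n+1}+p^{n}-1)/p^{2n+1}$; otherwise bound $|Z^{^{\wedge}}(G)|/|G|$ by $1/p^{2}$ and insert the inductive hypothesis $D^{^{\wedge}}_{n-1}(G)\le (p^{n}+p^{n-1}-1)/p^{2n-1}$, whereupon the bracket becomes $(p^{n}-1)/p^{2n-1}$ and the right-hand side collapses to $(p^{n+1}+p^{n}-1)/p^{2n+1}$. For the forward implication in the equality statement one traces this chain: equality forces $D^{^{\wedge}}_{n-1}(G)>1/p^{n}$, then $[G:Z^{^{\wedge}}(G)]=p^{2}$, and finally $D^{^{\wedge}}_{n-1}(G)=(p^{n}+p^{n-1}-1)/p^{2n-1}$, and the inductive hypothesis applied to this last equality already delivers $G/Z^{^{\wedge}}(G)\cong\mathbb{Z}_p\times\mathbb{Z}_p$.

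The converse — that $G/Z^{^{\wedge}}(G)\cong\mathbb{Z}_p\times\mathbb{Z}_p$ forces equality — is where the real work lies and, I expect, is the main obstacle. Since the inductive hypothesis already gives $D^{^{\wedge}}_{n-1}(G)=(p^{n}+p^{n-1}-1)/p^{2n-1}$, the outer estimates above are tight, and what remains is to check that every inequality internal to Theorem \ref{main2} also becomes an equality under this structural hypothesis. Using $x\wedge x=1$, $Z^{^{\wedge}}(G)\le C^{^{\wedge}}_G(x)$ and $[G:Z^{^{\wedge}}(G)]=p^{2}$ one gets $[G:C^{^{\wedge}}_G(x)]=p$ for all $x\notin Z^{^{\wedge}}(G)$, and (when $G$ is non-abelian) that $Z(G)=Z^{^{\wedge}}(G)$, $G/Z(G)\cong\mathbb{Z}_p\times\mathbb{Z}_p$ and $C^{^{\wedge}}_G(x)=C_G(x)$ for non-central $x$, so that the index bounds and the estimate for $d(G)$ are all sharp. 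The genuinely delicate point is to show that each exterior centralizer $C^{^{\wedge}}_G(x_i)$ appearing in Lemma \ref{main1} is cyclic, so that $D^{^{\wedge}}_{n-1}(C^{^{\wedge}}_G(x_i))=1$; this is automatic for $n=1$ because $D^{^{\wedge}}_0\equiv 1$, but for $n\ge 2$ it requires a structural analysis of these centralizers. Once that is in place, substituting $D^{^{\wedge}}_{n-1}(C^{^{\wedge}}_G(x_i))=1$ and $D^{^{\wedge}}_{n-1}(G)=(p^{n}+p^{n-1}-1)/p^{2n-1}$ into Lemma \ref{main1} and simplifying gives $D^{^{\wedge}}_n(G)=(p^{n+1}+p^{n}-1)/p^{2n+1}$, completing the induction.
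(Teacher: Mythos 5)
Your handling of the inequality and of the forward direction of the equality is correct and actually somewhat cleaner than the paper's: the paper splits the inductive step into an abelian case (Theorem \ref{main2} plus the Beyl--Tappe bound $[G:Z^{^{\wedge}}(G)]\geq p^2$) and a non-abelian case (via $D^{^{\wedge}}_n(G)\leq d_n(G)$ and Proposition \ref{prim2}), whereas your substitution of $d(G)\leq \frac1p+\frac{(p-1)|Z(G)|}{p|G|}$ into Theorem \ref{main2} collapses both cases into the single recursion $D^{^{\wedge}}_{n}(G)\leq \frac{1}{p^{n}}+\frac{|Z^{^{\wedge}}(G)|}{|G|}\bigl(D^{^{\wedge}}_{n-1}(G)-\frac{1}{p^{n}}\bigr)$, and your elementary argument that $[G:Z^{^{\wedge}}(G)]\geq p^2$ for non-cyclic $G$ replaces the citation of Beyl--Tappe. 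The extraction of the equality data from this recursion (positivity of the bracket, then tightness of both factors, then the inductive hypothesis at level $n-1$) is also fine.

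The genuine gap is the converse implication, which you explicitly leave open. You reduce it to showing $D^{^{\wedge}}_{n-1}(C^{^{\wedge}}_G(x))=1$ for $x\notin Z^{^{\wedge}}(G)$ and propose to achieve this by proving that each exterior centralizer $C^{^{\wedge}}_G(x)$ is \emph{cyclic}, deferring that ``structural analysis.'' This is not only missing but aimed at the wrong target: $C^{^{\wedge}}_G(x)$ contains $Z^{^{\wedge}}(G)$, which can be far from cyclic, so cyclicity is generally false and is not what the recursion requires. What is needed --- and what the paper does in two lines --- is only that every wedge of two elements of $C^{^{\wedge}}_G(x)$ is trivial in $G\wedge G$, so that \emph{all} $n$-tuples from $C^{^{\wedge}}_G(x)$ are counted in Lemma \ref{main1} and the corresponding factor equals $1$. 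Concretely: since $[G:Z^{^{\wedge}}(G)]=p^2$, $x\in C^{^{\wedge}}_G(x)\setminus Z^{^{\wedge}}(G)$ and $C^{^{\wedge}}_G(x)\neq G$, one gets $C^{^{\wedge}}_G(x)/Z^{^{\wedge}}(G)=\langle xZ^{^{\wedge}}(G)\rangle$ of order $p$; hence any $a,b\in C^{^{\wedge}}_G(x)$ have the form $a=x^iz_1$, $b=x^jz_2$ with $z_1,z_2\in Z^{^{\wedge}}(G)$, and $a\wedge b=x^i\wedge x^j=1$ because $x\in C^{^{\wedge}}_G(x^j)$ and this is a subgroup, so $x^i\in C^{^{\wedge}}_G(x^j)$. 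With this substituted into Lemma \ref{main1}, together with $[G:C^{^{\wedge}}_G(x)]=p$ for all $x\notin Z^{^{\wedge}}(G)$, all the inequalities become equalities and the converse follows. You should replace the ``cyclic centralizer'' plan with this computation; everything else in your argument stands.
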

\begin{proof}
We proceed by induction on $n$. For $n=1$ the result follows by Corollary \ref{coro}. Now assume that the result holds for $n-1$. First suppose that $G$ is abelian, by Theorem \ref{main2} we have
$$D^{^{\wedge}}_n(G)\leq\frac{1}{p^{n-1}}+\frac{(1-p)}{p^n}+\left(p^n\left(\frac{p^n+p^{n-1}-1}{p^{2n-1}}\right)-1\right)\frac{|Z^{\wedge}(G)|}{p^n|G|}.$$
By using Beyl and Tappe (\cite{beta}, proposition 4.9c), $|Z^{^{\wedge}}(G)|/|G|\leq1/p^2$ and so
$$D^{^{\wedge}}_n(G)\leq\frac{1}{p^{n}}+\frac1{p^2}\left(\frac{p^n+p^{n-1}-1}{p^{2n-1}}-\frac{1}{p^{n}}\right)=\frac{p^{n+1}+p^n-1}{p^{2n+1}}.$$
Now if $G$ is non-abelian, then since $D^{^{\wedge}}_n(G)\leq d_n(G)$, the result holds by Proposition \ref{prim2}.

Now assume that $G/Z^{^{\wedge}}(G)$ is an elementary abelian $p$-group of rank 2. Then for all $x\in G\setminus Z^{^{\wedge}}(G)$, $[G:C^{^{\wedge}}_G(x)]=p$ and $C^{^{\wedge}}_G(x)/Z^{^{\wedge}}(G)=\langle xZ^{\wedge}(G)\rangle$ is a cyclic group of order $p$ . For $a, b\in C^{^{\wedge}}_G(x)$ one has $a=x^iz_1$ and $b=x^jz_2$ for some $z_1,z_2\in Z^{^{\wedge}}(G)$ and $0\leq i,j\leq p-1$. Therefore $a\wedge b=x^iz_1\wedge x^jz_2=x^i\wedge x^j=1$ and so $D^{^{\wedge}}_{n-1}(C^{^{\wedge}}_G(x))=1$ for all $x\in G\setminus Z^{^{\wedge}}(G)$ and $n\geq1$. Hence all inequalities in the above proof and the proof of Theorem \ref{main2} turn in to equality.

Conversely, assume that $D^{^{\wedge}}_n(G)=({p^{n+1}+p^n-1})/{p^{2n+1}}$.Then
\begin{align*}
\frac{p^{n+1}+p^n-1}{p^{2n+1}}=D^{^{\wedge}}_n(G)&= \frac{1}{|G|^{n+1}} \sum_{x\in G}|C^{{^\wedge}}_{G}(x)|^n D^{^{\wedge}}_{n-1}(C^{^{\wedge}}_{G}(x))\\
&\leq \frac{1}{p^n}+\frac{|Z^{^{\wedge}}(G)|}{|G|}\left(D^{^{\wedge}}_{n-1}(G)-\frac{1}{p^n}\right)
\end{align*}
Since $G$ is not cyclic, $D^{^{\wedge}}_{n-1}(G)\leq(p^n+p^{n-1}-1)/p^{2n-1}$ and so
$$\frac{p^{n+1}+p^n-1}{p^{2n+1}}\leq\frac{1}{p^n}+\frac{|Z^{^{\wedge}}(G)|}{|G|}\left(\frac{p^n-1}{p^{2n-1}}\right).$$
It follows that $[G:Z^{^{\wedge}}(G)]\leq p^2$. On the other hand by \cite{beta}, Proposition 4.9c we have $[G:Z^{^{\wedge}}(G)]\geq p^2$. Therefore  $G/Z^{^{\wedge}}(G)$ is an elementary abelian $p$-group of rank 2.
\end{proof}
\begin{theorem}
Assume that $p$ is the smallest prime number dividing the order of non-abelian finite group $G$ and $Z^{^{\wedge}}(G)$ is proper subgroup of $Z(G)$. Then
 $$D^{^{\wedge}}_n(G)\leq\frac{p^{2n+1}(p+1)+p^{2n}-1}{p^{3n+1}(p+1)}.$$
\end{theorem}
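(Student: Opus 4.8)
The plan is to run the same induction on $n$ that drove the previous theorem, using Theorem \ref{main2} as the inductive engine but now supplying a better bound on $D^{^{\wedge}}_{n-1}(G)$ because we are in the situation $Z^{^{\wedge}}(G)\lneq Z(G)$. First I would dispose of the base case $n=1$: this is exactly part (ii) of Corollary \ref{coro}, which asserts $d^{^{\wedge}}(G)\leq(p^3+p-1)/p^4$, and one checks that $(p^{3}(p+1)+p^{2}-1)/(p^{4}(p+1))$ simplifies to $(p^3+p-1)/p^4$, so the two statements agree at $n=1$. For the inductive step, assume the asserted bound holds for $n-1$.

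Next I would apply Theorem \ref{main2} directly:
$$D^{^{\wedge}}_{n}(G)\leq\frac{1}{p^{n-1}}d(G)+\frac{(1-p)|Z(G)|-(1-p^nD^{^{\wedge}}_{n-1}(G))|Z^{^{\wedge}}(G)|}{p^n|G|}.$$
Now one uses the hypothesis $Z^{^{\wedge}}(G)\lneq Z(G)$, which forces $[Z(G):Z^{^{\wedge}}(G)]\geq p$, hence $|Z^{^{\wedge}}(G)|\leq|Z(G)|/p$; combined with the fact that $G$ is non-abelian, so $[G:Z(G)]\geq p^2$ and $d(G)\leq(p^2+p-1)/p^3$. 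The coefficient of $|Z^{^{\wedge}}(G)|$ in the bound, namely $p^nD^{^{\wedge}}_{n-1}(G)-1$, is nonnegative (since $D^{^{\wedge}}_{n-1}(G)\geq 1/p^{n-1}\geq 1/p^{n}$ fails in general — rather one should check the sign and, if negative, use $|Z^{^{\wedge}}(G)|\geq 1$ to bound that term, and if nonnegative, use $|Z^{^{\wedge}}(G)|\leq|Z(G)|/p$ together with the inductive bound on $D^{^{\wedge}}_{n-1}(G)$). After substituting $d(G)\leq(p^2+p-1)/p^3$ and the inductive estimate for $D^{^{\wedge}}_{n-1}(G)$, it remains an elementary (if slightly tedious) manipulation of rational functions of $p$ to collapse the right-hand side to $(p^{2n+1}(p+1)+p^{2n}-1)/(p^{3n+1}(p+1))$; I would organize this by writing everything over the common denominator $p^{3n+1}(p+1)$ and matching numerators.

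The main obstacle I anticipate is twofold. First, one has to be careful about the sign of the factor $1-p^nD^{^{\wedge}}_{n-1}(G)$ appearing in Theorem \ref{main2}, since the direction of the inequality when we replace $|Z^{^{\wedge}}(G)|$ by an upper or lower bound depends on that sign; the clean way out is probably to keep $|Z^{^{\wedge}}(G)|$ symbolic, substitute $|Z(G)|=|Z^{^{\wedge}}(G)|\cdot k$ with $k\geq p$ and $[G:Z(G)]=m\geq p^2$, and check that the resulting expression in $k$ and $m$ is maximized at $k=p$, $m=p^2$ — i.e. when $G/Z^{^{\wedge}}(G)$ is as small as the constraints allow. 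Second, verifying that the extremal configuration actually matches the claimed fraction requires the same computation as in the previous theorem's equality analysis (where $D^{^{\wedge}}_{n-1}$ of the relevant exterior centralizers equals $1$), so I would reuse that lemma-style observation: for $x\in Z(G)\setminus Z^{^{\wedge}}(G)$ the exterior centralizer contains $\langle x\rangle Z^{^{\wedge}}(G)$, on which all exterior brackets vanish, giving $D^{^{\wedge}}_{n-1}(C^{^{\wedge}}_G(x))=1$ and tightening the estimate exactly as needed.
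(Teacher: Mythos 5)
Your outline is essentially the paper's proof: induction on $n$, base case Corollary \ref{coro}(ii) (and your simplification of $(p^{3}(p+1)+p^{2}-1)/(p^{4}(p+1))$ to $(p^{3}+p-1)/p^{4}$ is correct), then Theorem \ref{main2} combined with $[Z(G):Z^{^{\wedge}}(G)]\geq p$, $[G:Z(G)]\geq p^{2}$, a bound on $d(G)$, and the inductive hypothesis, followed by an algebraic collapse to the stated fraction. The one point where you hesitate---the sign of $1-p^{n}D^{^{\wedge}}_{n-1}(G)$ and hence the direction of the substitutions---is genuinely the delicate step, and the paper passes over it in silence: it substitutes the upper bounds $|Z^{^{\wedge}}(G)|\leq |Z(G)|/p$ and $|Z(G)|/|G|\leq 1/p^{2}$ without checking that the relevant coefficients are nonnegative, and after the inductive bound is inserted the coefficient of $|Z(G)|/|G|$ is in fact negative for $n\geq 2$, so a literal execution of your plan (plug in $d(G)\leq(p^{2}+p-1)/p^{3}$, then $|Z(G)|/|G|\leq 1/p^{2}$, then match numerators) would replace a negatively weighted quantity by an upper bound and the inequality would point the wrong way. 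The clean repair, which your proposed case split essentially contains, is to use the sharper estimate $d(G)\leq \frac{1}{p}+\frac{p-1}{p}\cdot\frac{|Z(G)|}{|G|}$ (the paper itself invokes it in the capable-group theorem); then the two $|Z(G)|$ terms coming from Theorem \ref{main2} cancel and one is left with
$$D^{^{\wedge}}_{n}(G)\leq \frac{1}{p^{n}}+\left(D^{^{\wedge}}_{n-1}(G)-\frac{1}{p^{n}}\right)\frac{|Z^{^{\wedge}}(G)|}{|G|}.$$
If $D^{^{\wedge}}_{n-1}(G)<1/p^{n}$ this already gives $D^{^{\wedge}}_{n}(G)\leq 1/p^{n}$, which is below the claimed bound; otherwise the coefficient of $|Z^{^{\wedge}}(G)|/|G|$ is nonnegative, so one may insert $|Z^{^{\wedge}}(G)|/|G|\leq 1/p^{3}$ together with the inductive hypothesis, and a short computation shows the right-hand side then equals $(p^{2n+1}(p+1)+p^{2n}-1)/(p^{3n+1}(p+1))$ exactly. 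Your alternative of optimizing symbolically over $k=[Z(G):Z^{^{\wedge}}(G)]\geq p$ and $m=[G:Z(G)]\geq p^{2}$ is not simply maximized at the corner, for the sign reason above, and the closing equality analysis is unnecessary since the theorem asserts only an inequality.
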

\begin{proof}
Use induction on $n$. Since $G$ is not unicentral and abelian, $d(G)\leq (p^2+p-1)/p^3$ and $[Z(G):Z^{^{\wedge}}(G)]\geq p$. For $n=1$ the result follows by Corollary \ref{coro}. Now by Theorem \ref{main2} we have
$$D^{^{\wedge}}_n(G)\leq\frac{1}{p^{n-1}}\left(\frac{p^2+p-1}{p^3}\right)+\frac{|Z(G)|}{|G|}\left(\frac{p-p^2-1+p^n D^{^{\wedge}}_{n-1}(G)}{p^{n+1}}\right).$$
Since $G$ is non-abelian, $[G:Z(G)]\geq p^2$ and by the induction hypothesis,
\begin{eqnarray*}
D^{^{\wedge}}_n(G)&\leq&\frac{p^2+p-1}{p^{n+2}}+\frac1{p^2}\left(\frac{p^{2n-1}(2p+1)-p^{2n}(p+1)-1}{p^{3n-1}(p+1)}\right)\vspace{.3cm}\\
&=&\frac{p^{2n+1}(p+1)+p^{2n}-1}{p^{3n+1}(p+1)}.
\end{eqnarray*}
\end{proof}

When $G$ is a capable group, Our result gives a sharper upper bound for multiple exterior degree as the following.

\begin{theorem}
Let $G$ be a non-abelian capable group and $p$ be the smallest prime number dividing the order of $G$. Then $D^{^{\wedge}}_n(G)\leq 1/p^n.$
\end{theorem}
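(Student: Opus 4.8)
The plan is to mimic the argument of Theorem \ref{theo}, replacing the ordinary centralizers with exterior centralizers. The crucial observation is that for a capable group, $Z^{^{\wedge}}(G)=1$ by Ellis's characterization, so in the formula of Lemma \ref{main1} the contribution of the central terms collapses and every nontrivial exterior centralizer has index at least $p$ in $G$.

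First I would set up the induction on $n$. For $n=1$ the bound $d^{^{\wedge}}(G)\le 1/p$ follows from Theorem \ref{prim3} together with $Z^{^{\wedge}}(G)=1$: indeed that theorem gives $d^{^{\wedge}}(G)\le d(G)-\tfrac{p-1}{p}\cdot\tfrac{|Z(G)|}{|G|}$, and combining this with the bound $d(G)\le 1/p$ for non-abelian $G$ whose exterior center is trivial — which can be read off from Proposition 3.3 of \cite{mogh} since $Z^{^{\wedge}}(G)=1$ forces $Z(G)\cap G^{'}=1$ — yields $d^{^{\wedge}}(G)\le 1/p$. (Alternatively one invokes Corollary \ref{coro}(i) and the remark that a capable non-abelian group has $d^{^{\wedge}}(G)\le 1/p$ directly from Theorem \ref{prim3}.)

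For the inductive step I would apply Theorem \ref{main2} with $Z^{^{\wedge}}(G)=1$. That inequality becomes
\[
D^{^{\wedge}}_n(G)\le\frac{1}{p^{n-1}}d(G)+\frac{(1-p)|Z(G)|}{p^n|G|}\le\frac{1}{p^{n-1}}d(G),
\]
since $1-p\le 0$. Using $d(G)\le 1/p$ (again valid because $Z(G)\cap G^{'}\subseteq Z^{^{\wedge}}(G)=1$, so Theorem \ref{theo} applies, or directly Proposition 3.3 of \cite{mogh}) we get $D^{^{\wedge}}_n(G)\le 1/p^n$, which is exactly the claimed bound. In fact this route does not even need the inductive hypothesis on $D^{^{\wedge}}_{n-1}$, so the "induction" is only needed if one prefers to argue through Lemma \ref{main1} termwise; in that case the inductive hypothesis $D^{^{\wedge}}_{n-1}(C^{^{\wedge}}_G(x_i))\le 1/p^{n-1}$ for the non-central representatives, combined with $[G:C^{^{\wedge}}_G(x_i)]\ge p$ and $|\{x: C^{^{\wedge}}_G(x)=G\}|=|Z^{^{\wedge}}(G)|=1$, again gives the sum bound $1/p^n$.

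The main obstacle is making sure the hypothesis feeding $d(G)\le 1/p$ is correctly justified: one must verify that $Z^{^{\wedge}}(G)=1$ really does imply $Z(G)\cap G^{'}=1$ (equivalently that central elements outside the exterior center are detected), which follows because $Z(G)\cap G^{'}$ lies in $Z^{^{\wedge}}(G)$ — every element of $Z(G)\cap G^{'}$ wedges trivially with all of $G$. Once that containment is in hand the rest is a routine substitution into Theorem \ref{main2}. A secondary point to handle cleanly is the base case $n=1$, where one should cite the exact source (\cite{pr} or \cite{mogh}) for $d^{^{\wedge}}(G)\le 1/p$ in the capable non-abelian case rather than re-deriving it.
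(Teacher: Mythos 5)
Your argument hinges on the claim that $Z(G)\cap G^{'}\subseteq Z^{^{\wedge}}(G)$, so that capability would force $Z(G)\cap G^{'}=1$ and hence $d(G)\leq 1/p$ via Theorem \ref{theo} or Proposition 3.3 of \cite{mogh}. That containment is false. For a central element $x$ the map $y\mapsto x\wedge y$ is a homomorphism from $G$ into $M(G)$, and it vanishes exactly when $x\in Z^{^{\wedge}}(G)$; there is no reason for it to vanish merely because $x\in G^{'}$. Concretely, $D_8$ is non-abelian and capable with $Z^{^{\wedge}}(D_8)=1$ (the paper's Example 4.1), yet $Z(D_8)\cap D_8^{'}=Z(D_8)\cong\mathbb{Z}_2\neq 1$ and $d(D_8)=5/8>1/2$; the extraspecial groups of order $p^3$ and exponent $p$ behave the same way. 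So the inequality $d(G)\leq 1/p$ that drives both your base case and your inductive step is unavailable, and is actually false for $D_8$.

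Even after replacing it with the correct bound $d(G)\leq \frac{1}{p}+\frac{p-1}{p}\cdot\frac{|Z(G)|}{|G|}$, your route through Theorem \ref{main2} does not close. Note that capability gives $|Z^{^{\wedge}}(G)|=1$, not $0$: the term you dropped equals $\bigl(p^nD^{^{\wedge}}_{n-1}(G)-1\bigr)/(p^n|G|)$, which is nonnegative under the inductive hypothesis and works against you. Carrying everything through, Theorem \ref{main2} yields only $D^{^{\wedge}}_n(G)\leq \frac{1}{p^n}+\frac{p-1}{p^n|G|}$, an overshoot of $\frac{p-1}{p^n|G|}$. That deficit is exactly what the paper's proof is built to absorb: it first disposes of the case $Z(G)\cap G^{'}=1$ by Proposition \ref{prop1} and Theorem \ref{theo}, and when $Z(G)\cap G^{'}\neq 1$ it invokes Theorem 2.8 of \cite{pr} to produce a non-central class representative $x_i$ with $[C_G(x_i):C^{^{\wedge}}_G(x_i)]\geq p$, whose term in the sum of Lemma \ref{main1} is then bounded by $|G|^{n-1}/p^{2n-1}$ instead of $|G|^{n-1}/p^{n-1}$; that single improved term is precisely what brings the total down to $1/p^n$. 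This ingredient is missing from your proposal. (The base case $n=1$ can be handled by citing the corresponding result of \cite{pr}, but the inductive step needs the same mechanism, not a substitution into Theorem \ref{main2} alone.)
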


\begin{proof}
Use induction on $n$. By Proposition \ref{prop1} and Theorem \ref{theo} one may assume that $Z(G)\cap G^{'}\neq1$. Assume that $\mathcal{C}=\{x_1,...,x_{k(G)}\}$ is a system of representatives for the conjugacy classes of a finite group $G$. Then thanks to Theorem 2.8 in \cite{pr} there is $x_i\in \mathcal{C}-Z(G)$ such that $[C_G(x_i):C^{^{\wedge}}_G(x_i)]\geq p$. Therefore
\begin{align*}
D^{^{\wedge}}_n(G)&= \frac{1}{|G|^n} \sum_{i=1}^{k(G)}\frac{|C^{^{\wedge}}_{G}(x_i)|^n}{|C_{G} (x_i) |} ~D^{^{\wedge}}_{n-1}(C^{^{\wedge}}_{G}(x_i))\\
&\leq \frac{1}{|G|^n}\left(|Z^{^{\wedge}}(G)||G|^{n-1}D^{^{\wedge}}_{n-1}(G)+\frac{|G|^{n-1}}{p^{n}}(|Z(G)|-|Z^{^{\wedge}}(G)|)+
\frac{|G|^{n-1}}{p^{2n-1}}\right)\\
&+\frac{1}{|G|^n}\left(\frac{|G|^{n-1}}{p^{n-1}}(k(G)-|Z(G)|-1)\right).
\end{align*}
Since $G$ is capable, $Z^{^{\wedge}}(G)=1$ and so by inductive hypothesis,
\begin{align*}
D^{^{\wedge}}_n(G)&\leq \frac{1}{|G|}\left(\frac{1}{p^{n-1}}+\frac{1}{p^{n}}(|Z(G)|-1|)+
\frac{1}{p^{2n-1}}+\frac{1}{p^{n-1}}(k(G)-|Z(G)|-1)\right)\\
&\leq \frac{1}{p^{n-1}}d(G)+\left(\frac{1-p}{p^n}\right)\frac{|Z(G)|}{|G|}.
\end{align*}
On the other hand $$d(G)\leq \frac{1}{p}+\left(\frac{p-1}{p}\right)\frac{|Z(G)|}{|G|},$$ and so the result follows.
\end{proof}

\begin{theorem}
If the orders of the groups $G$ and $H$ are coprime, then $$D^{^{\wedge}}_n(G\times H)= D^{^{\wedge}}_n(G)D^{^{\wedge}}_n(H). $$
\end{theorem}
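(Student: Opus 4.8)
The plan is to reduce the statement about $D^{^{\wedge}}_n$ to the well-known multiplicativity of the exterior square over direct products of coprime-order groups. First I would recall from the theory of nonabelian tensor/exterior products that when $\gcd(|G|,|H|)=1$ one has a natural isomorphism $(G\times H)\wedge(G\times H)\cong (G\wedge G)\times(H\wedge H)$, under which $(g_1,h_1)\wedge(g_2,h_2)$ corresponds to $(g_1\wedge g_2,\, h_1\wedge h_2)$; this is because the cross terms $g\otimes h$ with $g\in G$, $h\in H$ vanish in the coprime case (their images lie in a subgroup killed by both $|G|$ and $|H|$). Granting this, the key consequence is the pointwise criterion
$$
(g_1,h_1)\wedge(g_2,h_2)=1 \quad\Longleftrightarrow\quad g_1\wedge g_2=1 \ \text{ and }\ h_1\wedge h_2=1 .
$$

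Next I would set $x_k=(g_k,h_k)\in G\times H$ for $k=1,\dots,n+1$ and observe that the defining condition "$x_i\wedge x_j=1$ for all $1\le i,j\le n+1$" separates, by the above criterion, into the conjunction of "$g_i\wedge g_j=1$ for all $i,j$" and "$h_i\wedge h_j=1$ for all $i,j$". Hence the set counted in the numerator of $D^{^{\wedge}}_n(G\times H)$ is in bijection with the Cartesian product of the corresponding sets for $G$ and for $H$, so its cardinality factors as a product. Dividing by $|G\times H|^{n+1}=|G|^{n+1}|H|^{n+1}$ then gives $D^{^{\wedge}}_n(G\times H)=D^{^{\wedge}}_n(G)\,D^{^{\wedge}}_n(H)$ directly, with no induction needed. (Alternatively, one could run an induction on $n$ via Lemma \ref{main0}, using that $C^{^{\wedge}}_{G\times H}((g,h))=C^{^{\wedge}}_G(g)\times C^{^{\wedge}}_H(h)$ in the coprime case, but the bijective argument is cleaner.)

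The main obstacle is establishing the coprime splitting of the exterior square and, in particular, the pointwise criterion above. The cleanest route is to cite the decomposition of the nonabelian tensor square for coprime direct factors (this is classical, going back to work of Brown--Loday and made explicit for tensor/exterior squares; it also follows from $M(G\times H)\cong M(G)\times M(H)$ together with $(G\times H)'\cong G'\times H'$ in the coprime case, since $G\wedge G$ is the extension of $G'$ by $M(G)$). Once the identification $C^{^{\wedge}}_{G\times H}((g,h))=C^{^{\wedge}}_G(g)\times C^{^{\wedge}}_H(h)$ (equivalently the pointwise criterion) is in hand, the rest is the elementary counting bijection described above. I would make sure to state explicitly where coprimality is used, namely precisely in killing the mixed terms so that a wedge in the product vanishes iff each coordinate wedge vanishes.
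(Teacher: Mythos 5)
Your proposal is correct, but it proceeds differently from the paper. The paper's proof is an induction on $n$: the base case $n=1$ is quoted from an earlier paper, and the inductive step applies Lemma \ref{main0} together with the factorization $C^{^{\wedge}}_{G\times H}((g,h))=C^{^{\wedge}}_{G}(g)\times C^{^{\wedge}}_{H}(h)$ (which the paper asserts without proof) to split the sum over $G\times H$ into a product of the two sums for $G$ and $H$. You instead go straight for the pointwise criterion $(g_1,h_1)\wedge (g_2,h_2)=1 \Leftrightarrow g_1\wedge g_2=1$ and $h_1\wedge h_2=1$, derived from the coprime splitting of the exterior (or tensor) square, and then obtain the result by a single counting bijection on the $(n+1)$-tuples, with no induction. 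The two arguments rest on the same structural fact --- the vanishing of the mixed terms $g\otimes h$ when $\gcd(|G|,|H|)=1$, which is exactly what makes the exterior centralizers factor --- but your version is more self-contained: it makes explicit where coprimality enters and dispenses with both the induction and the appeal to the $n=1$ case in the earlier paper. What the paper's route buys is uniformity with the rest of Section 3, where every result is driven through the recursion of Lemma \ref{main0}; what your route buys is brevity and a cleaner conceptual picture. Just make sure, if you write this up, to give a precise reference (or a short proof) for the coprime decomposition of the tensor square, since that is the one nontrivial input.
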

\begin{proof}
We proceed by induction on $n$. For $n=1$, it has been proved in \cite{pr} Lemma 2.10. Since  $C^{^{\wedge}}_{G\times H}(g,h)= C^{^{\wedge}}_{G}(g) \times C^{^{\wedge}}_{H}(h)$, by Theorem \ref{main0} and inductive hypothesis we have
\begin{align*}
D^{^{\wedge}}_n(G\times H)&=\frac{1}{|G\times H|^{n+1}} \sum_{(g,h)\in G\times H } |C^{^{\wedge}}_{G\times H}(g,h)|^n D^{^{\wedge}}_{n-1}(C^{^{\wedge}}_{G\times H}(g,h))\\
&=\frac{1}{|G|^{n+1}|H|^{n+1}} \sum_{g\in G} \sum_{h\in H}|C^{^{\wedge}}_{G}(g) |^n |C^{^{\wedge}}_{H}(h) |^n D^{^{\wedge}}_{n-1}(C^{^{\wedge}}_{G}(g)) ~D^{^{\wedge}}_{n-1}(C^{^{\wedge}}_{H}(h))\\&=
\left(\frac{1}{|G|^{n+1}} \sum_{g\in G}|C^{^{\wedge}}_{G}(g) |^n D^{^{\wedge}}_{n-1}(C^{^{\wedge}}_{G}(g))\right)\\ 
&\times\left(\frac{1}{|H|^{n+1}}\sum_{h\in H}|C^{^{\wedge}}_{H}(h) |^n D^{^{\wedge}}_{n-1}(C^{^{\wedge}}_{H}(h))\right)\\
&= D^{^{\wedge}}_n(G)D^{\wedge}_n(H).
\end{align*}
\end{proof}
\section{Some examples }
\begin{example}
\rm{Let $D_{2n}=\langle a,b | a^2=b^n= (ab)^2=1 \rangle$ be the dihedral group of order $2n.$ By Example 3.1 in \cite{pr} we have $Z^{\wedge}(D_{2n})=1$ and $d(D_{2n})=d^{\wedge}(D_{2n})$. Furthermore, $C^{^{\wedge}}_{D_{2n}}(b^i)=\langle b\rangle$ and $C^{^{\wedge}}_{D_{2n}}(ab^i)=\langle ab^i\rangle$. By using induction on $m$ we prove that $$D^{^{\wedge}}_m(D_2n)=\frac{n^m+2^{m+1}-1}{2(2n)^m}.$$
\begin{align*}
D^{^{\wedge}}_m(D_2n)&= \frac{1}{|D_{2n}|^{m+1}} \sum_{x\in D_{2n}} |C^{^{\wedge}}_{D_{2n}}(x)|^m D^{^{\wedge}}_{m-1}(C^{^{\wedge}}_{D_{2n}}(x))\\
&=\frac{1}{(2n)^{m+1}}\left((2n)^mD^{^{\wedge}}_{m-1}(D_{2n})+(n-1)n^m+2^m n \right)\\
&=\frac{1}{(2n)^{m+1}}\left((2n)^m\left(\frac{n^{m-1}+2^m-1}{2(2n)^{m-1}}\right)+(n-1)n^m+2^m n \right)\\
&=\frac{n^m+2^{m+1}-1}{2(2n)^m}.
\end{align*}}
\end{example}
\begin{example}
\rm{Let $Q_n=\langle a,b | a^n=b^2=(ab)^2\rangle$ be the generalized quaternion group of order $4n$, $n\geq1$ that its Schur multiplier is trivial and so is multiple unidegree group. By the same computation as in the above example, we have
$$D^{^{\wedge}}_m(Q_n)=d_m(Q_n)=\frac{n^m+2^{m+1}-1}{2(2n)^m}$$  }
\end{example}


\end{document}